\theoremstyle{plain}
\newtheorem{theorem}{Theorem}[section]
\newtheorem{Thm}[theorem]{Theorem}
\newtheorem{Lem}[theorem]{Lemma}
\newtheorem{Pro}[theorem]{Proposition}
\newtheorem{Rem}[theorem]{Remark}
\newtheorem{Cor}[theorem]{Corollary}
\newtheorem{Conj}[theorem]{Conjecture}
\newtheorem*{corollary*}{Corollary}
\newtheorem*{theorem*}{Theorem}
\def \R {{\mathbb {R}}}
\begin{document}

\setcounter{page}{1}

\newcommand{\M}{{\mathcal M}}
\newcommand{\loc}{{\mathrm{loc}}}
\newcommand{\core}{C_0^{\infty}(\Omega)}
\newcommand{\sob}{W^{1,p}(\Omega)}
\newcommand{\sobloc}{W^{1,p}_{\mathrm{loc}}(\Omega)}
\newcommand{\merhav}{{\mathcal D}^{1,p}}
\newcommand{\be}{\begin{equation}}
\newcommand{\ee}{\end{equation}}
\newcommand{\mysection}[1]{\section{#1}\setcounter{equation}{0}}
\newcommand{\laplace}{\Delta}
\newcommand{\pl}{\laplace_p}
\newcommand{\grad}{\nabla}
\newcommand{\pd}{\partial}
\newcommand{\bo}{\pd}
\newcommand{\csub}{\subset \subset}
\newcommand{\sm}{\setminus}
\newcommand{\ssm}{:}
\newcommand{\diver}{\mathrm{div}\,}
\newcommand{\bea}{\begin{eqnarray}}
\newcommand{\eea}{\end{eqnarray}}
\newcommand{\bean}{\begin{eqnarray*}}
\newcommand{\eean}{\end{eqnarray*}}
\newcommand{\thkl}{\rule[-.5mm]{.3mm}{3mm}}
\newcommand{\cw}{\stackrel{\rightharpoonup}{\rightharpoonup}}
\newcommand{\id}{\operatorname{id}}
\newcommand{\supp}{\operatorname{supp}}
\newcommand{\wlim}{\mbox{ w-lim }}
\newcommand{\mymu}{{x_N^{-p_*}}}
\newcommand{\abs}[1]{\lvert#1\rvert}
\newcommand{\pf}{\noindent \mbox{{\bf Proof}: }}


\renewcommand{\theequation}{\thesection.\arabic{equation}}
\catcode`@=11 \@addtoreset{equation}{section} \catcode`@=12
\newcommand{\Real}{\mathbb{R}}

\newcommand{\B}{\mathbb{B}}

\newcommand{\real}{\mathbb{R}}
\newcommand{\Nat}{\mathbb{N}}
\newcommand{\ZZ}{\mathbb{Z}}
\newcommand{\CC}{\mathbb{C}}
\newcommand{\Pess}{\opname{Pess}}
\newcommand{\Proof}{\mbox{\noindent {\bf Proof} \hspace{2mm}}}
\newcommand{\mbinom}[2]{\left (\!\!{\renewcommand{\arraystretch}{0.5}
\mbox{$\begin{array}[c]{c}  #1\\ #2  \end{array}$}}\!\! \right )}
\newcommand{\brang}[1]{\langle #1 \rangle}
\newcommand{\vstrut}[1]{\rule{0mm}{#1mm}}
\newcommand{\rec}[1]{\frac{1}{#1}}
\newcommand{\set}[1]{\{#1\}}
\newcommand{\dist}[2]{$\mbox{\rm dist}\,(#1,#2)$}
\newcommand{\opname}[1]{\mbox{\rm #1}\,}
\newcommand{\mb}[1]{\;\mbox{ #1 }\;}
\newcommand{\undersym}[2]
 {{\renewcommand{\arraystretch}{0.5}  \mbox{$\begin{array}[t]{c}
 #1\\ #2  \end{array}$}}}
\newlength{\wex}  \newlength{\hex}
\newcommand{\understack}[3]{%
 \settowidth{\wex}{\mbox{$#3$}} \settoheight{\hex}{\mbox{$#1$}}
 \hspace{\wex}  \raisebox{-1.2\hex}{\makebox[-\wex][c]{$#2$}}
 \makebox[\wex][c]{$#1$}   }%
\newcommand{\smit}[1]{\mbox{\small \it #1}}
\newcommand{\lgit}[1]{\mbox{\large \it #1}}
\newcommand{\scts}[1]{\scriptstyle #1}
\newcommand{\scss}[1]{\scriptscriptstyle #1}
\newcommand{\txts}[1]{\textstyle #1}
\newcommand{\dsps}[1]{\displaystyle #1}
\newcommand{\dx}{\,\mathrm{d}x}
\newcommand{\dy}{\,\mathrm{d}y}
\newcommand{\dz}{\,\mathrm{d}z}
\newcommand{\dt}{\,\mathrm{d}t}
\newcommand{\dr}{\,\mathrm{d}r}
\newcommand{\du}{\,\mathrm{d}u}
\newcommand{\dv}{\,\mathrm{d}v}
\newcommand{\dV}{\,\mathrm{d}V}
\newcommand{\ds}{\,\mathrm{d}s}
\newcommand{\dS}{\,\mathrm{d}S}
\newcommand{\dk}{\,\mathrm{d}k}

\newcommand{\dphi}{\,\mathrm{d}\phi}
\newcommand{\dtau}{\,\mathrm{d}\tau}
\newcommand{\dxi}{\,\mathrm{d}\xi}
\newcommand{\deta}{\,\mathrm{d}\eta}
\newcommand{\dsigma}{\,\mathrm{d}\sigma}
\newcommand{\dtheta}{\,\mathrm{d}\theta}
\newcommand{\dnu}{\,\mathrm{d}\nu}

\newcommand{\1}{\mathbf{1}}

\def\ga{\alpha}     \def\gb{\beta}       \def\gg{\gamma}
\def\gc{\chi}       \def\gd{\delta}      \def\ge{\epsilon}
\def\gth{\theta}                         \def\vge{\varepsilon}
\def\gf{\phi}       \def\vgf{\varphi}    \def\gh{\eta}
\def\gi{\iota}      \def\gk{\kappa}      \def\gl{\lambda}
\def\gm{\mu}        \def\gn{\nu}         \def\gp{\pi}
\def\vgp{\varpi}    \def\gr{\rho}        \def\vgr{\varrho}
\def\gs{\sigma}     \def\vgs{\varsigma}  \def\gt{\tau}
\def\gu{\upsilon}   \def\gv{\vartheta}   \def\gw{\omega}
\def\gx{\xi}        \def\gy{\psi}        \def\gz{\zeta}
\def\Gg{\Gamma}     \def\Gd{\Delta}      \def\Gf{\Phi}
\def\Gth{\Theta}
\def\Gl{\Lambda}    \def\Gs{\Sigma}      \def\Gp{\Pi}
\def\Gw{\Omega}     \def\Gx{\Xi}         \def\Gy{\Psi}

\newcommand{\Rn}{\mathbb{R}^n}
 \newcommand{\lanbox}{{\, \vrule height 0.25cm width 0.25cm depth 0.01cm \,}}
  \renewcommand{\qedsymbol}{\lanbox}
   \newcommand{\dd}{\mathrm{d}}
   \newcommand{\Bb}{\mathbb{B}}

\renewcommand{\div}{\mathrm{div}}
\newcommand{\red}[1]{{\color{red} #1}}

\newcommand{\cqfd}{\begin{flushright}                  
			 $\Box$
                 \end{flushright}}

\pagestyle{headings}
\title{Index of the critical catenoid}

\author{Baptiste Devyver}
\address{Baptiste Devyver, Department of Mathematics, Technion, 32000 Haifa, Israel}
\email{devyver@technion.ac.il}

\begin{abstract}

We show that the critical catenoid, as a free boundary minimal surface of the unit ball in $\mathbb{R}^3$, has index $4$. We also prove that a free boundary minimal surface of the unit ball in $\mathbb{R}^3$, that is not a flat disk, has index at least $4$.

\end{abstract}

\maketitle

\tableofcontents

\section{Introduction}

The series of recent articles \cite{FS1}, \cite{FS2} by A. Fraser and R. Schoen triggered a renewed interest in free boundary minimal surfaces. More specifically, Fraser and Schoen studied mainly free boundary minimal surfaces in the unit ball $\Bb^n$ of $\R^n$, and in \cite{FS1}, they provide two interesting examples of those, namely the {\em critical catenoid} (a rescaled piece of the usual catenoid in $\R^3$), and the {\em critical M\"obius band} (a non-orientable free boundary minimal surface in $\Bb^4$). They also constructed in \cite{FS2} free boundary minimal surfaces in $\Bb^3$ of genus $0$ and with an arbitrary number of boundary components. Guided by an hypothetical analogy between (closed) minimal surfaces of $S^3$ and free boundary minimal surfaces of $\Bb^3$, P. Sargent \cite{Sar} and independently L. Ambrozio, A. Carlotto and B. Sharp \cite{ACS2}, have in particular given lower estimates for the Morse index of free boundary minimal surfaces in $\Bb^3$, in terms of their topology, similar to the one found by A. Savo \cite{Sav} for minimal surfaces of the sphere $S^3$. Actually, Savo's result also holds in higher dimensions, and the results \cite{Sar}, \cite{ACS2} are much more general than this since they deal with free boundary minimal hypersurfaces in open sets of $\R^n$ satisfying some convexity assumption (see \cite[Theorem 10]{ACS2} for the strongest result so far obtained). Let us also mention the work \cite{ACS1}, where Savo's results are in particular extended to closed minimal hypersurfaces inside rank one symmetric spaces. A heuristic that has been in the air for some time concerning the analogy between closed minimal surfaces of $S^3$ and free boundary minimal surfaces of $\Bb^3$ is the following: since the Clifford torus is arguably the simplest non-trivial closed minimal surface in $S^3$, and the critical catenoid is the simplest non-trivial free boundary minimal surface in $\Bb^3$, is it possible that the two be ``analogous'' in some sense? There are several celebrated characterizations of the Clifford torus: it is the unique minimal torus in $S^3$ (Lawson's conjecture, proved by S. Brendle \cite{B}), it has minimal Willmore energy among all tori in $S^3$ (Willmore conjecture, proved by F. Coda Marques and A. Neves \cite{CN}), and finally, it has minimal index (equal to $5$) among all non-totally geodesic minimal surfaces in $S^3$ (F. Urbano, \cite{U}). Actually, Urbano's result has been important in Coda Marques and Neves' approach. It is very natural to ask whether a similar characterization to Urbano's for the Clifford torus holds as well for the critical catenoid. However, while it is quite easy to compute the index of the Clifford torus, computing the index of the critical catenoid is harder, due to the presence of the boundary and the more complicated equations. Our main goal in this article is to fill this gap and to show that minimal disks excluded, the critical catenoid has the smallest index possible among all orientable free boundary surfaces of the ball. More precisely, we show the following result:

\begin{Thm}\label{main}

Every orientable, free boundary minimal surface in the unit ball $\Bb^3$, is either a flat disk passing through the origin (and in this case has index $1$), or has index at least $4$. Moreover, the index of the critical catenoid is exactly $4$, so that the latter lower bound for the index is attained.

\end{Thm}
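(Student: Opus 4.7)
My proof of Theorem~\ref{main} splits into three parts: first, the essentially routine observation that a flat equatorial disk has index exactly $1$ (here $|A|^2 \equiv 0$ and the Robin--Laplace problem $\Delta u + \lambda u = 0$, $\partial_\eta u = u$, on the unit disk has exactly one negative eigenvalue by a standard Bessel-function computation); second, the general lower bound $\mathrm{index}(\Sigma) \geq 4$ for every orientable free boundary minimal surface $\Sigma \subset \Bb^3$ that is not a flat disk; third, the matching upper bound for the critical catenoid via explicit spectral analysis. Throughout I will work with the second variation
$$Q(u) = \int_\Sigma \bigl(|\nabla u|^2 - |A|^2 u^2\bigr)\,dA - \int_{\partial\Sigma} u^2\,ds,$$
whose Morse index equals the dimension of a maximal subspace on which $Q$ is negative definite, equivalently the number of negative eigenvalues of the Jacobi operator $L = \Delta_\Sigma + |A|^2$ with Robin condition $\partial_\eta u = u$.

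For the lower bound, I would test $Q$ on the four functions $\phi_0 \equiv 1$ and $\phi_i := \langle X, e_i\rangle$, $i=1,2,3$, where $X$ is the position vector in $\R^3$. Minimality yields $\Delta_\Sigma X = 0$, hence $L\phi_i = |A|^2 \phi_i$, and the free boundary condition $\eta = X$ on $\partial\Sigma$ gives $\partial_\eta \phi_i = \langle \eta, e_i\rangle = \phi_i$, so each $\phi_i$ automatically satisfies the Robin boundary condition. A short integration by parts collapses $Q$ on this ansatz to the transparent identity
$$Q\Bigl(c_0 + \sum_{i=1}^3 c_i\phi_i\Bigr) = -c_0^2\,|\partial\Sigma| \,-\, \int_\Sigma |A|^2\Bigl(c_0 + \sum_{i=1}^3 c_i x_i\Bigr)^2 dA,$$
which is manifestly negative semidefinite, and strictly negative on the whole $4$-dimensional span as soon as $\Sigma$ is not totally geodesic: vanishing forces $c_0 = 0$ and then $\sum c_i x_i \equiv 0$ on the real-analytic set $\{|A|^2 > 0\}$, so $c = 0$. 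The same non-triviality of $|A|^2$ yields linear independence of $\{1, x_1, x_2, x_3\}$: any non-trivial linear relation would confine $\Sigma$ to an affine plane, and the free boundary condition $\eta = X$ then forces this plane to pass through the origin, making $\Sigma$ a flat disk. This establishes $\mathrm{index}(\Sigma) \geq 4$ in the non-disk case.

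For the upper bound on the critical catenoid I would exploit its $S^1$-symmetry. In isothermal coordinates $(u,\theta) \in [-T,T] \times [0,2\pi)$ with $T\tanh T = 1$, the induced metric is $r_0^2\cosh^2 u\,(du^2 + d\theta^2)$ with $r_0 = (T\cosh T)^{-1}$, and $|A|^2 = 2/(r_0^2\cosh^4 u)$. Fourier expansion $f(u,\theta) = \sum_k f_k(u) e^{ik\theta}$ reduces $Lf + \lambda f = 0$ to the family of Sturm--Liouville problems
$$-f_k'' + (k^2 - 2\,\mathrm{sech}^2 u)\,f_k = \lambda\,r_0^2\cosh^2 u\, f_k, \qquad f_k'(\pm T) = \pm T^{-1} f_k(\pm T),$$
so that $\mathrm{index}(\Sigma) = n_0 + 2\sum_{k\geq 1} n_k$, where $n_k$ counts the negative eigenvalues of the $k$-th problem. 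The lower bound already furnishes $n_0 \geq 2$ (from $1$ and $x_3 = r_0 u$, both in the $k=0$ mode) and $n_1 \geq 1$ (from $x_1, x_2$), so it suffices to prove $n_0 \leq 2$, $n_1 \leq 1$, and $n_k = 0$ for $k \geq 2$.

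The genuinely technical step, and what I expect to be the main obstacle, is establishing these sharp upper bounds. The crucial observation is that $\cosh u$ itself satisfies the Robin boundary condition, exactly because $\tanh T = 1/T$. Substituting $f_k = \cosh u \cdot h_k$ and integrating the cross term $2\sinh u\cosh u\, h_k h_k'$ by parts, the Robin boundary contribution cancels identically against the term produced from the derivative of $\sinh u \cosh u$, leaving the clean identity
$$Q^{(k)}(f_k) = \int_{-T}^{T}\Bigl[\cosh^2 u\,(h_k')^2 + \bigl((k^2-1)\cosh^2 u - 2\bigr)\, h_k^2\Bigr]\,du,$$
in which $h_k$ satisfies Neumann boundary conditions at $\pm T$. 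For $k \geq 2$ the coefficient $(k^2-1)\cosh^2 u - 2 \geq 3 - 2 > 0$, so $Q^{(k)} > 0$ for every nontrivial $f_k$ and $n_k = 0$ immediately. For $k=0$ and $k=1$ the potential is no longer sign-definite, and I would count negative eigenvalues via Sturm oscillation theory applied to the reduced weighted problem, using the explicit Jacobi fields coming from ambient symmetries---vertical translation ($\tanh u$ in the $k=0$ mode), the dilation direction ($u\tanh u - 1$, which vanishes at $\pm T$ thanks to $T\tanh T = 1$, again in the $k=0$ mode), and horizontal translation ($\mathrm{sech}\, u$ in the $k=1$ mode)---as reference solutions whose zero counts pin down $n_0 = 2$ and $n_1 = 1$ exactly. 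Combining these contributions yields $\mathrm{index}(\Sigma_{\mathrm{cat}}) = 2 + 2\cdot 1 + 0 = 4$, matching the lower bound.
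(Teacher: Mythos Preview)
Your lower bound is essentially the paper's Proposition~\ref{lower_index}: both test $Q$ on $\mathrm{span}\{1,x_1,x_2,x_3\}$, and your collapsed formula $Q(c_0+\sum c_i x_i)=-c_0^2|\partial\Sigma|-\int_\Sigma|A|^2(c_0+\sum c_ix_i)^2$ is just a more explicit rendering of the paper's inequality $Q<S$ on that space (Lemma~\ref{aux_quad}).

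For the upper bound you take a genuinely different route. The paper does \emph{not} count Robin eigenvalues mode by mode; instead it argues via the Steklov spectrum of $J$ and a Dirichlet decomposition: for $u$ that is $Q$-orthogonal to $\mathcal{W}=\mathrm{span}\{1,v_x^\perp,v_y^\perp,v_z^\perp\}$ it writes (after adjusting by a constant) $u=h+g$ with $Jh=0$ and $g|_{\partial\Sigma}=0$, uses $\lambda_1^{\mathrm{Dir}}(J)=0$ (from the dilation field $\xi$) to get $Q(g)\geq0$, and a Fourier argument restricted to Jacobi fields (Proposition~\ref{positivity}, with Lemma~\ref{ground_state} applied to the supersolution $\cosh^2 s$) for $Q(h)\geq0$. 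Your approach---Fourier plus the $\cosh u$ ground-state substitution turning the Robin form into a Neumann form---is closer in spirit to Smith--Zhou's. Your $k\geq2$ step is correct and arguably cleaner than the paper's positivity of $Q_2$: the Robin boundary contribution cancels precisely because $\tanh T=1/T$, and $(k^2-1)\cosh^2 u-2\geq1$ is immediate.

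The gap is in your $k=0,1$ sketch. For $k=1$ the Jacobi field that actually pins down $n_1=1$ is not horizontal translation $\mathrm{sech}\,u$ but the \emph{rotation} field $\Lambda(u)=a(u\,\mathrm{sech}\,u+\sinh u)$: after your substitution, $h=\Lambda/\cosh u=a(u\,\mathrm{sech}^2 u+\tanh u)$ is odd, satisfies $h'(\pm T)=0$ exactly (since $h'=2a\,\mathrm{sech}^2 u\,(1-u\tanh u)$ and $1-T\tanh T=0$), and solves $-(\cosh^2 u\,h')'=2h$; thus it is the second Neumann eigenfunction with eigenvalue exactly $2$, so only the constant lies strictly below and $n_1=1$. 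By contrast $\mathrm{sech}\,u$ does \emph{not} satisfy your Robin condition (it is a Steklov eigenfunction for $J$ with eigenvalue $-1$, not $1$), so it cannot serve as the reference solution you need. For $k=0$ neither $\xi$ nor $\tanh u$ meets the Robin condition, and no single Jacobi field delivers $n_0\leq2$ as cleanly; you will need an honest oscillation or Robin--Dirichlet comparison argument (e.g.\ split into even and odd parts and exploit that $\xi>0$ on $(-T,T)$ with $\xi(\pm T)=0$ forces the mixed Neumann--Dirichlet index on $[0,T]$ to vanish, then control the one-parameter Robin shift at the endpoint), which is more work than your sketch indicates.
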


\begin{Rem}
{\em 
The {\em nullity} of the critical catenoid can also be computed. According to \cite[Prop. 6.15]{MNS}, it is equal to $2$.
}
\end{Rem}
In the case of minimal surfaces of $S^3$, according to F. Urbano \cite{U}, the Clifford torus is the unique minimal surface of index $5$. We conjecture that this property also holds for the critical catenoid:

\begin{Conj}\label{uniqueness}

Let $\Sigma$ be a free boundary minimal surface in the unit ball $\Bb^3$ with index $4$. Then, $\Sigma$ is isometric to the critical catenoid. 

\end{Conj}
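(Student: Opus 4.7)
My strategy is to adapt F.~Urbano's proof \cite{U} of the index characterization of the Clifford torus in $S^3$ to the free boundary setting, by exploiting the equality case of Theorem~\ref{main}. That theorem produces a distinguished $4$-dimensional space $\mathcal{V}$ of test functions on which the Jacobi--Robin quadratic form $Q$ is strictly negative; under the hypothesis $\mathrm{Ind}(\Sigma)=4$, this $\mathcal{V}$ must essentially coincide with the negative eigenspace $E^-(L)$ of the Jacobi operator $Lu=\Delta u+|A|^2u$ with boundary condition $\partial_\nu u=u$. The first step is therefore to promote each member of a natural basis of $\mathcal{V}$ to a genuine Robin eigenfunction of $L$, not merely a function of small Rayleigh quotient.

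Concretely, the natural candidates are the Gauss components $\phi_a(p)=\langle N(p),a\rangle$ and the support functions $\psi_b(p)=\langle p,b\rangle$ for $a,b\in\R^3$. Minimality of $\Sigma$ gives $L\phi_a=0$ and $L\psi_b=|A|^2\psi_b$ in the interior, while on $\partial\Sigma$ --- where the conormal $\nu$ equals $p$ and $N\perp p$ --- one computes $\partial_\nu\psi_b-\psi_b=0$ and $\partial_\nu\phi_a-\phi_a=-\langle A(\nu),a\rangle-\langle N,a\rangle$. The rotational Killing fields $\zeta_c(p)=\langle p\wedge N(p),c\rangle$ are exact Robin Jacobi fields at eigenvalue $0$, hence sit in the null space of $Q$; the $L^2$-orthogonality between $E^-(L)$ and $\mathrm{span}\{\zeta_c\}$ then gives scalar identities which, combined with the forced eigenfunction equations for $\phi_a$ and $\psi_b$, should produce pointwise constraints on $|A|^2$ and on the behaviour of $A$ at the boundary, in direct analogy with Urbano's rigidity identity $|A|^2\equiv 2$ in the Clifford case.

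Next one must translate these pointwise identities into a global symmetry. The goal is to show that the rigidity equations force the existence of a nontrivial rotational Killing field of $\R^3$ tangent to $\Sigma$, i.e.\ a continuous $S^1$-action on $\Sigma$. Simultaneously the topology of $\Sigma$ must be restricted: combining $\mathrm{Ind}(\Sigma)=4$ with the topological lower bounds of Sargent~\cite{Sar} and Ambrozio--Carlotto--Sharp~\cite{ACS2} should force $\Sigma$ to have low genus and few boundary components, and one hopes to conclude that $\Sigma$ is an annulus. A rotationally symmetric free boundary minimal annulus in $\Bb^3$ is determined by its profile curve, which by minimality is a catenary and by the orthogonality of the boundary meeting is pinned to the unique scale of the critical catenoid, finishing the classification.

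The main obstacle is the passage from pointwise eigenfunction rigidity to the global existence of a Killing field. In Urbano's setting the ambient space $S^3$ has a large isometry group whose induced Jacobi fields are easy to normalize, but in the free boundary setting only rotations and the identity survive as ambient symmetries preserving $\Bb^3$, so the gap between ``eigenfunction data'' and ``global rotational symmetry'' is considerably wider. A second difficulty is excluding low-index candidates of higher topology, such as the Fraser--Schoen genus-zero families \cite{FS2}, from having index exactly $4$: a refinement of the topological index inequalities, or direct spectral computations on those surfaces, will likely be needed. In the worst case the argument may reduce to the uniqueness of the free boundary minimal annulus in $\Bb^3$ (the Fraser--Li conjecture), in which case one would obtain the full statement only conditionally.
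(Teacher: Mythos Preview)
The statement you are attempting to prove is labeled in the paper as a \emph{Conjecture}, not a Theorem; the paper does not contain a proof of it. Section~7 establishes only partial results: Lemma~\ref{Steklov4} shows that index~$4$ forces the first Steklov eigenvalue of $\Delta$ to equal~$1$; Corollary~\ref{stable} then gives Dirichlet stability of~$J$, positivity of the support function $\xi=(X,N)$, and genus zero when $\partial\Sigma$ is embedded; and the final Corollary invokes the Fraser--Schoen annulus uniqueness theorem to conclude \emph{only under the additional hypothesis that $\Sigma$ is an annulus}. So there is no ``paper's own proof'' to compare against --- the full statement is open.

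As for your outline, the candid self-assessment in your last paragraph is accurate: the programme does not close. The first concrete gap is in ``promoting'' your basis to genuine Robin eigenfunctions. The support function $\psi_b$ satisfies $L\psi_b=|A|^2\psi_b$, which is an eigenvalue equation only if $|A|^2$ is already constant; and the Gauss component $\phi_a$ satisfies $\partial_\nu\phi_a-\phi_a=-\langle A\nu,a\rangle-\langle N,a\rangle$ on $\partial\Sigma$, which is not a scalar multiple of $\phi_a$ unless the boundary second fundamental form is already very special. Urbano's argument in $S^3$ works because the ambient Killing fields produce Jacobi fields that are \emph{automatically} eigenfunctions of the closed Jacobi operator, and their $L^2$-orthogonality to the next eigenspace forces the pointwise identity $|A|^2\equiv 2$ directly. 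In the free boundary case neither of your two families lands in a single Robin eigenspace, so the orthogonality relations you can write down (against the rotational fields $\zeta_c$) are integral identities on $\partial\Sigma$, not pointwise constraints on $|A|^2$ in the interior. This is exactly why the paper retreats to the Steklov spectrum of $\Delta$ rather than the Robin spectrum of $J$: the coordinate functions $x_v$ \emph{are} exact Steklov eigenfunctions, and Lemma~\ref{Steklov4} squeezes information out of that.

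The second gap is the one you flag yourself: even granting $\sigma_1=1$, genus zero, and Dirichlet stability --- everything the paper actually proves --- there is at present no known mechanism to force rotational symmetry or to bound the number of boundary components. The index lower bounds of \cite{Sar} and \cite{ACS2} are linear in the topology but with constants that do not exclude, say, three boundary components at index~$4$. Your closing sentence is therefore the correct summary: absent new ideas, the argument collapses to (a case of) the Fraser--Li conjecture, which is itself open.
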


\begin{Rem}
{\em
While this article was in preparation, the author has been informed of related works by G. Smith and D. Zhou \cite{SZ} on the one hand, and by H. Tran \cite{T} on the other, in which the index of the critical catenoid is also computed. Our proof and H. Tran's use the Steklov spectrum of the Jacobi operator, while G. Smith and D. Zhou's use the Robin spectrum. However all three proofs are different, and have been obtained independently.
}
\end{Rem}
The plan of this article is as follows: in Section 2, we set up the setting and recall well-known definitions. In Section 3, we present the critical catenoid and we introduce suitable coordinates. In Section 4, we study the Dirichlet problem for the stability operator on the catenoid, that is instrumental in our proof of Theorem \ref{main}. In Section 5, we prove the part of Theorem \ref{main} pertaining to the critical catenoid (see Theorem \ref{CC}). In Section 6, we finish the proof of Theorem \ref{main}, by proving the universal lower bound for the index (see Proposition \ref{lower_index}). In Section 7, we present some partial results related to Conjecture \ref{uniqueness}.

\bigskip

\begin{center}{\bf Acknowledgments} \end{center}
The author is thankful to A. Fraser for introducing him to this problem, as well as for many interesting discussions. What is more, all the results contained in Sections 6 and 7 of this paper have been obtained in collaboration with A. Fraser. 

During the time this research was carried out, the author was partially supported by the Natural Sciences and Engineering Research Council of Canada through a post-doctoral fellowship.

\section{Free boundary minimal hypersurfaces in the ball}

Let us first recall the concept of a free boundary minimal hypersurface of the unit ball $\mathbb{B}^{n+1}=\{x\in\R^{n+1}\,;\,||x||\leq 1\}$ of $\R^{n+1}$. For the purpose of this article, a hypersurface (with boundary) of $\R^{n+1}$ is called admissible if it is smoothly embedded in $\Bb^{n+1}$, and if its boundary lies in the unit sphere $S^{n}=\partial\Bb^{n+1}$. A free boundary minimal hypersurface in $\Bb^{n+1}$ is by definition a critical point of the $n$-volume functional, restricted to admissible hypersurfaces. Equivalently, it is an admissible hypersurface $\Sigma^{n}$ which is minimal, and which intersects orthogonally the unit sphere $S^{n}$ at its boundary. This last property is easily seen to be equivalent to the fact that at the boundary of $\Sigma^{n}$, the exterior conormal $\nu$ coincides with the position vector $X$. In this article, we shall be concerned with the {\em index} of these free boundary minimal hypersurfaces. We will assume that $\Sigma^{n}$ is oriented, and denote by $N$ a smooth unit normal. The index of a free boundary minimal hypersurface in $\Bb^{n+1}$ can be defined as the (Morse) index of the quadratic form associated with the second variation of the $n$-volume functional, defined on the set of admissible hypersurfaces. It is well-known that if $\Sigma$ is a free boundary minimal hypersurface, and $\Sigma_t$, $t\in [0,1)$ is a one-parameter family of admissible, normal deformations of $\Sigma$ (thus, $\Sigma_0=\Sigma$), then 

$$\frac{\partial}{\partial t}\Big|_{t=0} \mathrm{Vol}(\Sigma_t)=0,\,\,\,\frac{\partial^2}{\partial t^2}\Big|_{t=0} \mathrm{Vol}(\Sigma_t)=\int_\Sigma(|\nabla u|^2-|A|^2u^2)-\int_{\partial\Sigma}u^2.$$
Here, $u$ is a smooth, real function on $\Sigma$, such that the deformation $\Sigma_t$ occurs at $t=0$ in the direction of the normal vector field $uN$. Also, $A$ denotes the second fundamental form, and $|A|^2$ is the square of its norm, i.e. the sum of the squares of the principal curvatures. This leads one to consider the quadratic form

$$Q(u)=\int_\Sigma(|\nabla u|^2-|A|^2u^2)-\int_{\partial\Sigma}u^2.$$
The quadratic form $Q$ is naturally associated to a second-order differential operator, the {\em Jacobi operator} $J=\Delta-|A|^2$ (in this article, we shall take the convention that $\Delta$ has non-negative spectrum). The (Morse) index of $\Sigma$, as a free boundary minimal hypersurface, is then defined as the maximal dimension of a vector space of smooth functions on $\Sigma$, on which $Q$ is negative definite. The index has been extensively studied for closed hypersurfaces, in which case it is simply equal to the number of negative eigenvalues of the Jacobi operator. However, the presence of a boundary term in $Q$ makes things more difficult. It is well-known (see \cite{CFP}) that the index of $\Sigma$ is equal to the number of negative eigenvalues of the following Robin eigenvalue problem:

$$\left\{\begin{array}{lcl}
Ju=\lambda u\mbox{ on }\Sigma\\\\
\frac{\partial u}{\partial\nu}= u \mbox{ on }\partial\Sigma,
\end{array}\right.$$
However, in the free boundary case several other legitimate eigenvalue problems can be considered for $J$, in connection to the index (for example, Dirichlet, Steklov, etc...); and indeed, in order to compute the index of the critical catenoid, we will study the Steklov problem, and not the Robin one.

To conclude this preliminary section, let us recall an interesting result, proved by Fraser and Schoen in \cite{FS2}. For $v$ a constant vector in $\R^{n+1}$, denote $v^\perp:=(N,v)$, which is a smooth function on $\Sigma$. Note also that $Jv^\perp=0$ for all $v\in\mathbb{R}^3$. That is, $v^\perp$ is a Jacobi field. Then, by \cite[Theorem 3.1]{FS2},

$$Q(v^\perp)=-2\int_\Sigma |v^\perp|^2.$$
If $n=2$, the vector space $\{v^\perp \,;\,v\in \R^{n+1}\}$ has dimension $3$, unless $\Sigma$ is a flat disk. Thus, in particular, every free boundary minimal surface of $\Bb^3$, that is not a flat disk, has index at least $3$. Our main result in this article implies that this lower bound can be improved to $4$, and moreover $4$ is optimal. Of course, a flat disk is a free boundary minimal surface of the unit ball if and only if it passes through the origin, and in this case it is not hard to check that it has index $1$, by using polar coordinates and Fourier analysis. 

\section{The critical catenoid}

In this section, we present the main protagonist of this article, namely the critical catenoid, and give some useful properties related to it. Let $\mathcal{C}_a$, $a>0$, be the catenoid in $\R^3$ parametrized by

$$X_a(t,\theta)=(a\cosh(t/a)\cos(\theta),a\cosh(t/a)\sin(\theta),t)=a(\cosh(s)\cos(\theta),\cosh(s)\sin(\theta),s),\,s=t/a.$$
On $\mathcal{C}_a$, the unit normal is given by

$$N=\left(-\frac{\cos(\theta)}{\cosh(t/a)},-\frac{\sin(\theta)}{\cosh(t/a)},\tanh(t/a)\right).$$
A simple computation shows that there exists a unique value of the parameter $a$, such that $\mathcal{C}_a$ intersects the unit sphere orthogonally. For this value of $a$, the part of $\mathcal{C}_a$ lying inside the unit ball is called the {\em critical catenoid}. In the above coordinates, it means that $t\in [-aT,aT]$, where $T$ is the unique positive solution of the equation

$$T\tanh(T)=1.$$
With this definition of $T$, the dilation parameter $a$ has the value

$$a=\frac{1}{T\cosh(T)}.$$
\bigskip

\noindent {\bf{\em From now on and until Section 6, $\Sigma$ will denote the critical catenoid as described above. }}

\subsection{A choice of coordinates}

In the coordinates $(s,\theta)$, $s=t/a$, the metric on $\Sigma$ writes

$$g=a^2\cosh^2(s)(ds^2+d\theta^2),$$
with $s\in [-T,T]$ and $\theta\in [0,2\pi)$. The Jacobi operator $J$ is given by

$$Ju(s,\theta)=-\frac{1}{a^2\cosh^2(s)}\left(\frac{\partial^2 u}{\partial s^2}+\frac{\partial^2u}{\partial \theta^2}\right)-\frac{2}{a^2\cosh^4(s)}u,$$
and the quadratic form $Q$ writes

$$\begin{array}{rcl}
Q(u(s,\theta))&=&\int_{[-T,T]\times S^1}\left\{\left|\frac{\partial u}{\partial s}\right|^2+\left|\frac{\partial u}{\partial \theta}\right|^2-\frac{2}{\cosh^2(s)}u^2\right\}\,dsd\theta\\\\
&&-\frac{1}{T}\int_0^{2\pi}\left\{u^2(T,\theta)+u^2(-T,\theta)\right\}\,d\theta
\end{array}$$
It will be convenient to conformally change the metric $g$ on $\Sigma$, in order to see the resulting surface as a part of the unit sphere $S^2$ and hence use spherical coordinates. While not strictly necessary (our proof would work without it), it does simplify the Jacobi operator, and the equation for Jacobi fields becomes easier. Thus, let us consider the metric $\tilde{g}$, conformal to $g$, defined by

$$\tilde{g}=(\cosh(s))^{-2}(ds^2+d\theta^2).$$
Note that $\tilde{g}=K_g\,g$, where $K_g$ is the Gauss curvature of $g$. and it follows that the metric $\tilde{g}$ has Gauss curvature constant and equal to $1$: indeed, since $\Sigma$ is minimal, the metric $\tilde{g}$ is the pull-back of the canonical metric on the $2$-sphere by the Gauss map, hence has Gauss curvature constant and equal to $1$ (outside of the critical points of the Gauss map). Moreover, it is easily seen that $(\Sigma,\tilde{g})$ is isometric to an annulus inside the $2$-dimensional sphere $S^2$: indeed, let

$$
\varphi=2\arctan(e^{-s}),
$$
then by use of standard trigonometric formulas, one computes that

$$
\cosh^{-1}(s)=\sin(\varphi).
$$
Therefore, the metric $\tilde{g}$ can be written as

$$\tilde{g}=d\varphi^2+\sin^2(\varphi)d\theta^2=g_{(S^2,can)},$$
the canonical metric on $S^2$ written in spherical coordinates $(\varphi,\theta)$. The surface $(\Sigma,\tilde{g})$ is therefore isometric to the annulus $\Omega:=\{\varphi^*\leq \varphi\leq \pi-\varphi^*,\,0\leq \theta<2\pi\}$ of the unit sphere $S^2$. Also, observe that

$$
\tanh(s)=\cos(\varphi).
$$
Therefore, since $-T\leq s\leq T$, and $T\tanh(T)=1$, one obtains

$$(\theta,\varphi)\in\Omega\Leftrightarrow -\frac{1}{T}\leq \cos(\varphi)\leq \frac{1}{T}.$$
Concerning the quadratic form $Q$, using the conformal invariance of the Dirichlet energy in dimension $2$, one can write $Q$ as

$$Q(u)=\int_{\Omega}(|\nabla u|^2-2u^2)-\frac{\cosh(T)}{T}\int_{\partial\Omega}u^2\,d\sigma,$$
where all the quantities (gradient, volume, etc) are computed in the canonical metric of $S^2$. Thus, one sees that the operator naturally associated to $Q$ in the metric $\tilde{g}$ is

$$\tilde{J}=\Delta_{S^2}-2.$$
Hence, as promised, in the conformal metric $\tilde{g}=g_{(S^2,can)}$ the operator associated to $Q$ has a particularly simple expression.

\subsection{Jacobi fields}

Recall that, by a slight abuse of notation, a smooth function $u$ on $\Sigma$ is called a Jacobi field if $Ju=0$. For the critical catenoid, there are several interesting Jacobi fields that one can consider: one example that we have already mentioned in Section 2 is $v^\perp=(v,N)$ for $v$ a constant vector in $\R^3$. Interestingly, if one considers the canonical basis $(v_x,v_y,v_z)$ of $\R^3$, one obtains three Jacobi fields $v_x^\perp$, $v_y^\perp$ and $v_z^\perp$, that have the extra property of being Steklov eigenfunctions for $J$; that is, they solve a Steklov boundary value problem:

\begin{equation}\label{Steklov}
\left\{\begin{array}{lll}

Ju=0,&\mbox{ on }\Sigma,\\
\frac{\partial u}{\partial \nu}=\lambda u,&\mbox{ on }\partial\Sigma,
\end{array}\right.
\end{equation}
where $\nu$ is the exterior unit normal to $\Sigma$ on $\partial \Sigma$ (which is also equal to $X$, the position vector, by the free boundary condition). More precisely, by a straightforward computation, one checks:

\begin{Lem}

The function $v_x^\perp$, $v_y^\perp$ and $v_z^\perp$ are Steklov eigenfunctions for the Jacobi operator $J$, associated respectively to the eigenvalues $-1$, $-1$ and $\frac{1}{\sinh^2(T)}$.

\end{Lem}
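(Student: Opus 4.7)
\medskip

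\noindent \textbf{Proof plan.} The identity $Jv^\perp=0$ is already recorded in Section~2, so the content of the lemma lies entirely in verifying the Steklov boundary condition $\partial u/\partial\nu=\lambda u$ on $\partial\Sigma=\{s=\pm T\}$ for each of the three candidate eigenfunctions. The plan is therefore to compute $v_x^\perp,v_y^\perp,v_z^\perp$ explicitly in the $(s,\theta)$ coordinates, evaluate $\partial/\partial\nu$ on the two boundary circles, and simplify using the two defining relations $T\tanh T=1$ and $a=1/(T\cosh T)$.

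First, using the formulas for $X_a$ and $N$ displayed at the start of Section~3, the components $v^\perp=(v,N)$ in the canonical basis of $\R^3$ are
\[
v_x^\perp=-\frac{\cos\theta}{\cosh s},\qquad v_y^\perp=-\frac{\sin\theta}{\cosh s},\qquad v_z^\perp=\tanh s.
\]
Since $g=a^2\cosh^2(s)(ds^2+d\theta^2)$, the outward unit conormal on the boundary is $\nu=(a\cosh T)^{-1}\partial_s$ at $s=T$ and $\nu=-(a\cosh T)^{-1}\partial_s$ at $s=-T$. Consequently, for any function $u(s,\theta)$,
\[
\frac{\partial u}{\partial\nu}\Big|_{s=\pm T}=\pm\frac{1}{a\cosh T}\,\frac{\partial u}{\partial s}\Big|_{s=\pm T}.
\]

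Applying this to $v_z^\perp$ gives $\partial_s v_z^\perp=\mathrm{sech}^2 s$, so on both boundary circles
\[
\frac{\partial v_z^\perp/\partial\nu}{v_z^\perp}=\frac{1}{a\cosh^3 T\,\tanh T}.
\]
Using $a=1/(T\cosh T)$ this becomes $T/(\cosh T\sinh T)$, and since $T\tanh T=1$ gives $T=\cosh T/\sinh T$, this simplifies to $1/\sinh^2 T$, as desired. For $v_x^\perp$ one has $\partial_s v_x^\perp=\cos\theta\,\sinh s/\cosh^2 s$, and the sign from $\nu$ at $s=-T$ combines with the sign of $\sinh s$ so that both boundary circles give
\[
\frac{\partial v_x^\perp/\partial\nu}{v_x^\perp}=-\frac{\sinh T}{a\cosh^2 T}=-T\tanh T=-1,
\]
where the last equality is precisely the defining equation of $T$. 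The computation for $v_y^\perp$ is identical up to replacing $\cos\theta$ by $\sin\theta$, producing again the eigenvalue $-1$.

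The main (indeed only) obstacle is to keep the signs straight at $s=-T$ and to recognize that the apparent dependence on $a$, $T$, $\cosh T$, $\sinh T$ collapses entirely through the two identities $T\tanh T=1$ and $a T\cosh T=1$; once these cancellations are performed the three claimed eigenvalues emerge. No further ingredients are needed.
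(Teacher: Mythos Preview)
Your proof is correct and is exactly the ``straightforward computation'' the paper alludes to but does not write out: compute $v^\perp$ from the explicit normal, take the conormal derivative using $\nu=\pm(a\cosh T)^{-1}\partial_s$, and collapse the constants via $T\tanh T=1$ and $aT\cosh T=1$. The paper gives no further details, so there is nothing to compare.
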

Notice that by Green's formula, if $u$ satisfies \eqref{Steklov}, then $Q(u)<0$ if and only if $\lambda<1$. This is the case for the functions $v_x^\perp$, $v_y^\perp$ and $v_z^\perp$. It will also be useful for future reference to write down the boundary values of these functions; to this end, write $\partial\Sigma=\partial\Sigma_+\cup\partial\Sigma_-$, with $\partial\Sigma_\pm:=\{(s,\theta)\,;\,s=\pm T,\,\theta\in [0,2\pi)\}$. Then,

$$v_x^\perp=\frac{1}{\cosh(T)}\cos(\theta),\, v_y^\perp=\frac{1}{\cosh(T)}\sin(\theta),\,v_z^\perp=\frac{\pm 1}{T}.$$
According to \cite[Propositions 2.1, 2.2] {BSE}, there are also three Jacobi fields coming from Killing (rotation) vector fields of $\R^3$, and one Jacobi field coming from varying the parameter $a$ defining the catenoid. The former are $v_{(x,z)}^\perp:=(-z\partial_x+x\partial_z,N)$, $v_{(y,z)}^\perp:=(-z\partial_y+y\partial_z,N)$ and $v_{(x,y)}^\perp:=-y\partial_x+x\partial_y,N)$, while the latter is $\xi:=-\left(\frac{\partial}{\partial a}X_a(t,\theta),N\right)$. By the fact that rotations with center at the origin preserve the unit ball and the definition of the quadratic form $Q$ in terms of second variation of the area functional (see Section 2), it is obvious that 

$$Q(v^\perp_{(x,z)})=Q(v^\perp_{(y,z)})=Q(v^\perp_{(x,y)})=0.$$
Explicitly, in coordinates, one has

$$v^\perp_{(x,z)}=\Lambda(s)\cos(\theta),\,v^\perp_{(y,z)}=\Lambda(s)\sin(\theta),\,v_{(x,y)}^\perp=0,$$
with 

\begin{equation}\label{Lambda}
\Lambda(s)=a\left(\frac{s}{\cosh(s)}+\tanh(s)\cosh(s)\right).
\end{equation}
It is easily checked that $v^\perp_{(x,z)}$ and $v^\perp_{(y,z)}$ are Steklov eigenfunctions for $J$, with associated eigenvalue $1$. Concerning the Jacobi field $\xi$, one computes in coordinates that

$$\xi=1-s\tanh(s).$$
From this computation, and recalling that $s$ ranges from $-T$ to $T$ with $T\tanh(T)=1$, one concludes that $\xi$ is positive in the interior of $\Sigma$ and vanishes on the boundary of $\Sigma$. The non-negativity of $\xi$ implies that it is a first eigenfunction of $J$ with Dirichlet boundary conditions. If we denote by $\lambda_1(J)<\lambda_2(J)\leq \cdots\leq \lambda_n(J)\leq \cdots$ the spectrum of $J$ with Dirichlet boundary conditions, it follows that $\lambda_1(J)=0$ (thus, $\Sigma$ is a maximal stable domain of a rescaled catenoid, for variations preserving the boundary). Of course, this is a well-known fact (see \cite{BSE}). 

\subsection{A Fourier decomposition} We introduce a natural Fourier decomposition for Jacobi fields: working in coordinates $(\varphi,\theta)$ on $\Sigma$, and assuming that $u$ is a smooth, real function on $\Sigma$, one can write $u(\varphi,\cdot)$ as the sum of its Fourier series in $\theta$:

\begin{equation}\label{Fourier_coef}
u(\varphi,\theta)=a_0(\varphi)+\sum_{n=1}^\infty(a_n(\varphi)\cos(n\theta)+b_n(\varphi)\sin(n\theta)).
\end{equation}
Using the well-known formula $\Delta_{\tilde{g}}=\lambda^{-2}\Delta_g$ relating the Laplacian $\Delta_{\tilde{g}}$ of a metric $\tilde{g}=\lambda^2g$ conformal to $g$, one easily finds that

$$\tilde{J}=\lambda^{-2}J.$$
According to our computations in Section 3.1, $\tilde{J}=\Delta_{(S^2,can)}-2$. Hence, $Ju=0$ if and only if in $(\varphi,\theta)$ coordinates, $(\Delta_{S^2}-2)u(\varphi,\theta)=0$. Recalling the expression of the Laplacian of the unit sphere in spherical coordinates,

$$-\Delta=\frac{1}{\sin(\varphi)}\frac{\partial}{\partial \varphi}\left(\sin(\varphi)\frac{\partial}{\partial \varphi}\right)+\frac{1}{\sin^2(\varphi)}\frac{\partial^2}{\partial\theta^2},$$
one finds that $u$ is a Jacobi field if and only if for every $n\geq0$ (resp. $n\geq1$), the functions $a_n(\varphi)$ (resp. $b_n(\varphi)$) are solutions of the following ODE, known in the literature as the associated Legendre equation with indices $(1,n)$ (see for example \cite{AS}):

\begin{equation}\label{Fourier_eq}
\Big\{-\frac{1}{\sin(\varphi)}\frac{\partial}{\partial \varphi}\left(\sin(\varphi)\frac{\partial}{\partial \varphi}\right)+\left(\frac{n^2}{\sin^2(\varphi)}-2\right)\Big\}a_n(\varphi)=0,
\end{equation}
for $\varphi\in [\varphi^*,\pi-\varphi^*]$. This suggests to introduce the differential operators $\mathscr{L}_n$, defined by

\begin{equation}\label{Fourier_op}
\mathscr{L}_n=-\frac{1}{\sin(\varphi)}\frac{\partial}{\partial \varphi}\left(\sin(\varphi)\frac{\partial}{\partial \varphi}\right)+\left(\frac{n^2}{\sin^2(\varphi)}-2\right).
\end{equation}
The operator $\mathscr{L}_0$ actually corresponds to the ``radial part'' of the Jacobi operator $J$: more precisely, if $u$ depends only on $s$ (or, equivalently, on $\varphi$), then $Ju=0$ if and only if $\mathscr{L}_0u=0$. We observe that the equation $\mathscr{L}_0\,a(\varphi)=0$ is a regular ODE on the interval $[\varphi^*,\pi-\varphi^*]$, and thus has a space of dimension $2$ of solutions: explicitly, a basis of the space of solutions is provided by $\xi$ and $v_z^\perp$. Consequently, if $u$ is a Jacobi field, i.e. if $Ju=0$, then its Fourier mode of order zero, $a_0(\varphi)$, is a linear combination of $\xi$ and $v_z^\perp$. 

Concerning the equation $\mathscr{L}_1\,a(\varphi)=0$, which is also a regular ODE, a basis of solution is obtained by looking at the expression of the Jacobi fields $v_x^\perp$ and $v_y^\perp$ on the one end, and $v_{(x,z)}^\perp$ and $v_{(y,z)}^\perp$ on the other hand, in $(\varphi,\theta)$ coordinates. Indeed,

$$v_x^\perp=-\chi(s)\cos(\theta),\,v_y^\perp=-\chi(s)\sin(\theta),$$
with $\chi(s)=\cosh^{-1}(s)=\sin(\varphi)$ satisfying $\mathscr{L}_1\chi=0$. The other independent solution of  $\mathscr{L}_1\,a(\varphi)=0$ is obtained by writing the function $\Lambda(s)$ from \eqref{Lambda} as a function of $\varphi$ rather than $s$; by a slight abuse of notation, we will also denote this function $\Lambda(\varphi)$. Thus, if $u$ is a Jacobi field, then its Fourier mode of order $1$, $a_1(\varphi)\cos(\theta)+b_1(\varphi)\sin(\theta)$, is a linear combination of the four Jacobi fields $v_x^\perp$, $v_y^\perp$, $v_{(x,z)}^\perp$ and $v_{(y,z)}^\perp$. 

Altogether, for a Jacobi field $u$, the Fourier decomposition in the $\theta$ variable can be equivalently written as

\begin{equation}\label{Fourier_detail}
u(\varphi,\theta)=a v_z^\perp+b\xi+\alpha v_x^\perp+\beta v_y^\perp+\gamma v_{(x,z)}^\perp+\delta v_{(y,z)}^\perp+\sum_{n= 2}^\infty(a_n(\varphi)\cos(n\theta)+b_n(\varphi)\sin(n\theta)),
\end{equation}
where $a,b,\alpha,\beta,\gamma,\delta$ are real numbers and $a_n(\varphi)$, $b_n(\varphi)$ are solutions of \eqref{Fourier_eq}.

\section{The Dirichlet problem}

A crucial ingredient in our proof that the critical catenoid has index $4$, is the solution of the Dirichlet problem for the stability operator $J$. However, since $\Sigma$ is a maximal domain of stability, the operator $J$ has a non-trivial kernel. Thus, bearing in mind the Fredholm alternative, we introduce a zero-flux condition as follows. For a smooth function $u$ on $\Sigma$, we define its {\em flux} on the boundary:

$$\Phi(u)=\int_{\partial\Sigma}\frac{\partial u}{\partial \nu}.$$
We will say that $u$ has zero flux at the boundary if $\Phi(u)=0$. 

\begin{Lem}\label{Dirichlet}

Let $u$ be a smooth function on $\partial\Sigma$. Then, the Dirichlet problem

$$\left\{\begin{array}{lcl}
J\hat{u}=0\,\mbox{on }\Sigma\\\\
\hat{u}=u\,\mbox{on }\partial\Sigma,
\end{array}\right.$$
is solvable if and only if $\int_{\partial\Sigma}u=0$, and in this case it has a unique solution $\hat{u}$ with zero flux at the boundary.
\end{Lem}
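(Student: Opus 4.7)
The plan is to apply the Fredholm alternative for the self-adjoint Dirichlet realization of $J$ on $\Sigma$, leveraging the fact already established in Section 3.2 that $\xi=1-s\tanh s$ is a strictly positive first Dirichlet eigenfunction with eigenvalue $0$. Being a positive ground state, $\xi$ spans the one-dimensional Dirichlet kernel of $J$, so both the existence criterion and the uniqueness assertion will be read off from this kernel.

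First I would reduce to the homogeneous boundary-value problem in the usual way: pick any smooth extension $w$ of $u$ to $\Sigma$ and set $\hat u = w+v$, so that the problem becomes $Jv=-Jw$ on $\Sigma$ with $v|_{\partial\Sigma}=0$. The standard Fredholm alternative for the self-adjoint Dirichlet realization of $J$ then says this is solvable if and only if $-Jw$ is $L^2$-orthogonal to the kernel, i.e.\ $\int_\Sigma Jw\cdot \xi = 0$. Using $J\xi=0$ and $\xi|_{\partial\Sigma}=0$, Green's formula gives
$$\int_\Sigma Jw\cdot\xi \;=\; -\int_{\partial\Sigma} w\,\partial_\nu\xi \;=\; -\int_{\partial\Sigma} u\,\partial_\nu\xi,$$
so the entire point reduces to showing that $\partial_\nu\xi$ is a nonzero constant on $\partial\Sigma$. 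This is a direct computation in the $(s,\theta)$ coordinates: $\xi'(s) = -\tanh s - s\,\mathrm{sech}^2 s$, so evaluating at $s=T$ and using the defining relation $T\tanh T=1$ gives $\xi'(T)=-\tfrac{1}{T}-\tfrac{T^2-1}{T}=-T$. Since $\nu = (a\cosh T)^{-1}\partial_s$ at $s=T$, one obtains $\partial_\nu\xi = -T/(a\cosh T)$ on $\partial\Sigma_+$; the same value appears on $\partial\Sigma_-$ by the symmetry $\xi(-s)=\xi(s)$. Thus $\partial_\nu\xi$ is a nonzero constant, and the solvability condition becomes exactly $\int_{\partial\Sigma} u=0$.

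For the uniqueness statement, note that by elliptic regularity any solution $\hat u$ is smooth up to the boundary, so $\Phi(\hat u)$ is well-defined. Two solutions differ by a Dirichlet-harmonic function for $J$, hence by a multiple of $\xi$: all solutions form the affine line $\{\hat u_0+\lambda\xi:\lambda\in\mathbb R\}$. Since $\Phi(\xi)=\int_{\partial\Sigma}\partial_\nu\xi = -2\pi T \cdot 2/( a\cosh T)\neq 0$, the map $\lambda\mapsto \Phi(\hat u_0+\lambda\xi)$ is a nonconstant affine function of $\lambda$, and there is a unique value of $\lambda$ that makes the flux vanish, yielding the unique zero-flux solution claimed.

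The only mild obstacle is the explicit computation of $\partial_\nu\xi$ on $\partial\Sigma$; once this is seen to be a nonzero constant, both the solvability condition $\int_{\partial\Sigma}u=0$ and the normalization via the flux $\Phi$ fall out of the Fredholm alternative automatically.
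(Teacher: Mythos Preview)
Your proof is correct and follows essentially the same route as the paper: reduce to the homogeneous problem, apply the Fredholm alternative using that the Dirichlet kernel of $J$ is spanned by $\xi$, use Green's formula together with $\xi|_{\partial\Sigma}=0$ and the constancy of $\partial_\nu\xi$ on $\partial\Sigma$ to get the condition $\int_{\partial\Sigma}u=0$, and then use $\Phi(\xi)\neq 0$ to single out the zero-flux solution. The only slip is in the explicit value of $\Phi(\xi)$, where you dropped the arc-length factor $a\cosh T$ (one has $\Phi(\xi)=-4\pi T$, not $-4\pi T/(a\cosh T)$), but this is irrelevant to the argument since all that is needed is $\Phi(\xi)\neq 0$.
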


\begin{proof}

Let $\tilde{u}$ be a smooth extension of $u$ to $\Sigma$ (supported in a tubular neighborhood of $\partial\Sigma$). Then, writing $\hat{u}=v+\tilde{u}$ and $f=-J\tilde{u}$, we see that solving the Dirichlet problem is equivalent to solving

\begin{equation}\label{DBP2}
\left\{\begin{array}{lcl}

Jv=f\,\mbox{on }\Sigma,\\
v=0\mbox{ on }\partial \Sigma.

\end{array}\right.
\end{equation}
By the Fredholm alternative (see \cite[Thm 6.2.4]{E}), there exists a solution of \eqref{DBP2} if and only if 

$$\int_\Sigma fw=0,$$
for every $w$ such that $J^*w=0$, $w|_{\partial\Sigma}=0$. Since $J$ is self-adjoint, $J^*=J$. Also, $0$ being the bottom of the spectrum of $J$ with Dirichlet boundary conditions, it has multiplicity one as a Dirichlet eigenvalue of $J$. Thus, \eqref{DBP2} has a solution if and only if

$$\int_{\Sigma}(J\tilde{u})\xi=0.$$
But by Green's formula,

$$\int_\Sigma (J\tilde{u})\xi-(J\xi)\tilde{u}=\int_{\partial\Sigma}u\frac{\partial\xi}{\partial\nu}-\xi\frac{\partial \tilde{u}}{\partial\nu}.$$
Taking into account that $J\xi=0$, $\xi|_{\partial\Sigma}=0$ and $\tilde{u}|_{\partial\Sigma}=u$, we obtain that \eqref{DBP2} has a solution if and only if

$$0=\int_{\partial\Sigma}u\frac{\partial\xi}{\partial\nu}.$$
Using that $\frac{\partial \xi}{\partial \nu}$ is constant on $\partial \Sigma$, we deduce that $\int_{\partial\Sigma}u=0$ is a necessary and sufficient condition for solving the Dirichlet problem. Concerning uniqueness, just note that if $u_1$ and $u_2$ are both solutions of the same Dirichlet problem, then $u:=u_1-u_2$ satisfies $Ju=0$ and $u|_{\partial\Sigma}=0$, which implies that $u$ is a multiple of $\xi$ (and conversely). Since $\int_{\partial\Sigma}\frac{\partial\xi}{\partial\nu}\neq 0$, it follows that if the Dirichlet problem is solvable then there is precisely one solution with zero flux.

\end{proof}

\section{Index of the critical catenoid}

Our aim in this section is to compute the index of the critical catenoid:

\begin{Thm}\label{CC}

The Morse index of the critical catenoid in the unit ball $\mathbb{B}^3$ is exactly $4$. 

\end{Thm}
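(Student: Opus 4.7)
The plan is to diagonalize $Q$ along the Fourier decomposition of Section~3.3 in $\theta$ and to count negative directions mode by mode. Since the Jacobi operator commutes with rotations in $\theta$, one has
$$Q(u)=Q_0(a_0)+\sum_{n\geq1}\bigl(Q_n(a_n)+Q_n(b_n)\bigr),$$
where $Q_n$ is the one-dimensional quadratic form on functions of $\varphi\in[\varphi^*,\pi-\varphi^*]$. The goal is to show that $Q_0$ has exactly $2$ negative directions, $Q_1$ has exactly $1$, and $Q_n$ is non-negative for $n\geq 2$; summing with the cosine/sine multiplicity of $2$ for $n\geq 1$ will then give the total index $2+2+0=4$.

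For each mode $n\geq 1$, the boundary data of any mode-$n$ function has zero integral over $\partial\Sigma$, so Lemma~\ref{Dirichlet} yields a decomposition $u=u^D+h$ with $u^D|_{\partial\Sigma}=0$ and $Jh=0$. A direct integration by parts (using $Ju^D$ tested against $h$ and $u^D|_{\partial\Sigma}=0$) will produce the clean decoupling $Q(u)=Q(u^D)+Q(h)$. The Dirichlet part is strictly positive: $\chi=\sin\varphi$ is a positive Jacobi field for $\mathscr{L}_1$ on $[\varphi^*,\pi-\varphi^*]$ with $\chi(\varphi^*)>0$, so $\lambda_1^{\mathrm{Dir}}(\mathscr{L}_1)>0$, and $\mathscr{L}_n-\mathscr{L}_1=(n^2-1)/\sin^2(\varphi)\geq 0$ propagates positivity to $n\geq 2$. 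Hence the index contribution of mode $n\geq 1$ equals the number of mode-$n$ Steklov eigenvalues of $J$ strictly less than $1$, because $Q(h)=(\lambda_S-1)\int_{\partial\Sigma}h^2$ for any Steklov eigenfunction~$h$. In mode $1$ the four explicit Jacobi fields $v_x^\perp,v_y^\perp,v_{(x,z)}^\perp,v_{(y,z)}^\perp$ are Steklov eigenfunctions with eigenvalues $-1,-1,1,1$, producing exactly two negative directions; in modes $n\geq 2$ I would solve $\mathscr{L}_n a=0$ in associated Legendre functions of degree $1$ and order $n$, and check, using $T\tanh(T)=1$, that both eigenvalues of the $2\times 2$ Dirichlet-to-Neumann matrix are at least $1$.

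For mode $n=0$ the solvability obstruction in Lemma~\ref{Dirichlet} forces a separate treatment. I would split a mode-$0$ function into its symmetric and antisymmetric parts under $\varphi\mapsto\pi-\varphi$. The antisymmetric sector has mean-zero boundary data, so the Dirichlet--Jacobi split applies; the unique antisymmetric Jacobi field is $v_z^\perp=\cos\varphi$, whose Steklov eigenvalue $1/\sinh^2(T)<1$ gives one negative direction, while the antisymmetric Dirichlet spectrum is positive by Sturm comparison with $v_z^\perp>0$ on $(\varphi^*,\pi/2)$. In the symmetric sector I would enlarge the symmetric Dirichlet space by one ``mean'' extension $G$, chosen modulo $\xi$ by requiring $JG\in\mathbb{R}\xi$ and $G|_{\partial\Sigma}=1$ (solvable by the Fredholm alternative applied to the Dirichlet kernel $\mathbb{R}\xi$). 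This makes both $\xi$ and $G$ $B$-orthogonal to the remaining symmetric Dirichlet space, so the question reduces to the signature of $Q$ on the two-dimensional subspace $\operatorname{span}\{\xi,G\}$. Since $Q(\xi)=0$ while the off-diagonal term $B(\xi,G)=(\partial_\nu\xi)|_{\partial\Sigma}\cdot|\partial\Sigma|$ is nonzero (the explicit computation $\xi=1-s\tanh(s)$ and $T\tanh(T)=1$ gives $\partial_\nu\xi\neq 0$), the restricted Gram matrix has negative determinant and contributes exactly one negative direction; combining with the antisymmetric sector, mode~$0$ contributes $2$.

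The step I expect to be the main obstacle is the upper bound for modes $n\geq 2$: showing that both Steklov eigenvalues of $\mathscr{L}_n$ on $[\varphi^*,\pi-\varphi^*]$ are at least $1$. The cheap comparison $\mathscr{L}_n\geq\mathscr{L}_1$ controls only the Dirichlet spectrum, not the Dirichlet-to-Neumann matrix; the monotonicity in $n$ of the DN eigenvalues is not immediate, and a careful closed-form analysis of the associated Legendre solutions at the endpoints $\varphi^*$, together with the relation $T\tanh(T)=1$, will be needed.
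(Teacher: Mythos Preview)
Your mode-by-mode strategy is sound and shares its backbone with the paper's proof: Fourier decomposition in $\theta$, the Dirichlet--Jacobi split via Lemma~\ref{Dirichlet}, positivity of the Dirichlet piece by stability, and reduction of each mode's index to Steklov data. Your mode-$1$ count and your symmetric/antisymmetric treatment of mode $0$ are correct (once one takes $V_0''=\xi^{\perp_{L^2}}$ inside the symmetric Dirichlet space, so that $Q(G,V_0'')=c\langle\xi,V_0''\rangle=0$ and the $2\times2$ block really decouples).

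The main divergence is precisely at the step you flag as the obstacle, and here the paper's route is both different and decisively simpler. Rather than analyzing the Dirichlet-to-Neumann matrix mode by mode through associated Legendre functions (note that $P_1^n\equiv 0$ for $n\geq 2$, so one would have to work with the less explicit second solutions), the paper first uses the trivial pointwise inequality $Q_n\geq Q_2$ for $n\geq 2$, and then proves $Q_2>0$ by a ground-state transform (Lemma~\ref{ground_state}): the explicit positive solution $h(\varphi)=\sin^{-2}\varphi$ of $\mathscr{L}_2 h=0$ satisfies $-\sin(\varphi^*)\,(\log h)'(\varphi^*)=2\sinh T>\cosh(T)/T$ (using $T\tanh T=1$), which is exactly the boundary inequality needed. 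This one computation replaces your proposed Legendre analysis for all $n\geq2$ simultaneously.

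There is also a streamlining available in mode $0$. Instead of constructing $G$ with $JG\in\mathbb{R}\xi$, the paper takes the constant $\mathbf{1}$ as the fourth test function from the outset, so that the negative space is $\mathcal W=\mathrm{span}\{\mathbf 1,v_x^\perp,v_y^\perp,v_z^\perp\}$. For the upper bound one then writes $f=u+a\mathbf 1$ with $\int_{\partial\Sigma}f=0$, observes $Q(u)=Q(f)-a^2Q(\mathbf 1)\geq Q(f)$, and applies the Dirichlet--Jacobi split \emph{globally} to $f$. The Fourier analysis is then performed only on the Jacobi extension $h$ (Proposition~\ref{positivity}), where the orthogonality $\int_{\partial\Sigma}hv_z^\perp=0$ kills the $v_z^\perp$ coefficient and the mode-$0$ remainder is a multiple of $\xi$, contributing $0$. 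This avoids the separate symmetric/antisymmetric bookkeeping altogether.
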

The idea of the proof is to show that the vector space $\mathcal{W}$ generated by the constant function $\1$ and the Jacobi fields $v^\perp$, $v\in\R^3$, is a $4$-dimensional space on which $Q$ is negative definite, and with the following property: if $u$ is a smooth function on $\Sigma$ such that for every $w$ in $\mathcal{W}$, $Q(u,w)=0$ (in other words, if $u$ is $Q$-orthogonal to $\mathcal{W}$), then $Q(u)\geq0$. Then, that the index is $4$ follows by a standard projection argument. In order to make this idea work, an important point will be to decompose $u$, thanks to the solution of the Dirichlet problem in Section 4, as the sum of a Jacobi field, and of a smooth function vanishing at the boundary of $\Sigma$.

\subsection{Index is at least $4$}
Later, in Section 5, we will prove that every free boundary minimal surface in the unit ball of $\R^3$, that is not a flat disk, has index at least $4$. However, in the case of the critical catenoid, there is an easy elementary argument, which we present now. Consider the Jacobi fields $v_x^\perp$, $v_y^\perp$ and $v_z^\perp$. Recalling their boundary values (see Section 3.2), it is easily seen that $v_x^\perp$, $v_y^\perp$ and $v_z^\perp$ are pairwise $L^2$-orthogonal when restricted to $\partial\Sigma$. We have already mentioned in Section 3.2 that $v_x^\perp$, $v_y^\perp$ and $v_z^\perp$ are Steklov eigenvalues for the operator $J$, that is they satisfy the boundary value problem \eqref{Steklov}. But by Green's formula, if $Ju=0$ and $\frac{\partial u}{\partial\nu}=\lambda u$, then

$$\begin{array}{rcl}
Q(u,v)&=&\int_{\partial\Sigma}v(\frac{\partial u}{\partial\nu}-u)\\\\
&=& (\lambda-1)\int_{\partial\Sigma}uv.
\end{array}$$
and so we conclude from the pairwise $L^2$-orthogonality of $v_x^\perp$, $v_y^\perp$ and $v_z^\perp$ on $\partial\Sigma$ that

$$0=Q(v_x^\perp,v_y^\perp)=Q(v_x^\perp,v_z^\perp)=Q(v_y^\perp,v_z^\perp).$$
Also, if one considers the constant function $\1$ on $\Sigma$, one has obviously

$$Q(\1)=-\int_\Sigma|A|^2-L(\partial\Sigma)<0,$$
and moreover by the above Green formula, and by the fact that 

$$0=\int_{\partial\Sigma}v_x^\perp=\int_{\partial\Sigma}v_y^\perp=\int_{\partial\Sigma}v_z^\perp,$$
one sees that 

$$Q(\1,v_x^\perp)=Q(\1,v_y^\perp)=Q(\1,v_z^\perp)=0.$$
From this discussion, we conclude that the quadratic form $Q$ is negative definite on the vector space $\mathcal{W}$ generated by $\1$ and the $v^\perp$, $v\in \R^3$. Since this is a $4$-dimensional space, we conclude that the index of $\Sigma$ is at least $4$.

\subsection{Index and Jacobi fields} Before proving the upper bound for the index, we need a crucial preliminary result, which states as follows:

\begin{Pro}\label{positivity}

Let $u$ be a Jacobi field on $\Sigma$, such that

$$0=\int_{\partial\Sigma}u=\int_{\partial\Sigma}uv^\perp,\,v\in \R^3.$$
Then,

$$Q(u)\geq0.$$

\end{Pro}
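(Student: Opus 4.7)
The plan is to expand $u$ in the Fourier basis \eqref{Fourier_detail}, translate the hypotheses into constraints on the Fourier coefficients, and then verify that $Q$---which decouples across Fourier modes in $\theta$---is non-negative on each piece. First I would write
$$u = a v_z^\perp + b\xi + \alpha v_x^\perp + \beta v_y^\perp + \gamma v_{(x,z)}^\perp + \delta v_{(y,z)}^\perp + \sum_{n\geq 2} u_n.$$
Using the boundary expressions of the Jacobi fields recorded in Section 3.2, together with the orthogonality of $1,\cos(n\theta),\sin(n\theta)$ on each boundary circle, the three conditions $\int_{\partial\Sigma}u\,v^\perp = 0$ for $v \in \{v_x, v_y, v_z\}$ translate into $a=\alpha=\beta=0$; the condition $\int_{\partial\Sigma}u=0$ is in fact automatic, since every Jacobi field of $\Sigma$ has zero mean on each boundary circle. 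Hence
$$u = b\xi + \gamma v_{(x,z)}^\perp + \delta v_{(y,z)}^\perp + \sum_{n\geq 2} u_n.$$

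Next, by $\theta$-orthogonality the summands live in distinct Fourier modes, so both the interior and the boundary integrals in $Q$ split as direct sums, giving
$$Q(u) = Q(b\xi) + Q(\gamma v_{(x,z)}^\perp + \delta v_{(y,z)}^\perp) + \sum_{n\geq 2} Q(u_n).$$
The first summand vanishes, since $\xi$ is a Jacobi field with zero Dirichlet data (Green's formula yields $Q(\xi)=0$). The second also vanishes, because $v_{(x,z)}^\perp$ and $v_{(y,z)}^\perp$ are Steklov eigenfunctions for $J$ with eigenvalue $1$: Green's formula gives $Q(v_{(x,z)}^\perp)=\int_{\partial\Sigma}v_{(x,z)}^\perp(\partial_\nu v_{(x,z)}^\perp - v_{(x,z)}^\perp) = 0$, similarly for $v_{(y,z)}^\perp$, and the cross term is killed by $\cos\theta/\sin\theta$ orthogonality. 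Everything thus reduces to proving $Q(u_n)\geq 0$ for each Jacobi field $u_n$ in Fourier mode $n\geq 2$.

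Passing to the conformal metric $\tilde g$ and writing $u_n = a_n(\varphi)\cos(n\theta)+b_n(\varphi)\sin(n\theta)$ with $\mathscr{L}_n a_n = \mathscr{L}_n b_n = 0$, integration by parts together with the identity $\cosh(T)\sin(\varphi^*)=1$ recasts $Q(u_n)\geq 0$ as the statement that both Steklov eigenvalues of $\mathscr{L}_n$ on $[\varphi^*,\pi-\varphi^*]$ are at least $\cosh(T)/T = 1/\sqrt{T^2-1}$. Since the bilinear form $\int (f'^2+(n^2/\sin^2\varphi-2)f^2)\sin\varphi\,d\varphi$ is pointwise non-decreasing in $n$ while the boundary normalization is $n$-independent, and since for $n\geq 2$ its Dirichlet part is strictly positive (the potential is at least $2$), the variational characterization shows that each Steklov eigenvalue---treated separately on the symmetric and antisymmetric subspaces under $\varphi\mapsto \pi-\varphi$---is non-decreasing in $n$. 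The problem therefore reduces to the single case $n = 2$.

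The main obstacle is then this explicit treatment of $\mathscr{L}_2$. A direct substitution confirms that $\psi(\varphi)=1/\sin^2\varphi$ solves $\mathscr{L}_2\psi=0$ (the symmetric solution), and reduction of order via the Abel identity $W\sin\varphi=\mathrm{const.}$ produces the antisymmetric solution $\phi(\varphi)=\cos\varphi\,(2+\sin^2\varphi)/\sin^2\varphi$. Substituting $\cos\varphi^*=1/T$ and $\sin^2\varphi^*=(T^2-1)/T^2$, one computes the two Steklov eigenvalues of $\mathscr{L}_2$ explicitly as
$$\mu_2^{\mathrm{sym}} = \frac{2}{\sqrt{T^2-1}}, \qquad \mu_2^{\mathrm{anti}} = \frac{3T^4+1}{(3T^2-1)\sqrt{T^2-1}}.$$
Both strictly exceed $1/\sqrt{T^2-1}=\cosh(T)/T$, by the elementary inequalities $2>1$ and $3T^4-3T^2+2>0$ (the latter being obvious since $T>1$), which closes the argument.
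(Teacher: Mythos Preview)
Your argument is correct and follows the paper's proof through the Fourier reduction: both obtain $a=\alpha=\beta=0$, use $Q(\xi)=Q(v_{(x,z)}^\perp)=Q(v_{(y,z)}^\perp)=0$, reduce to the modes $n\ge 2$, and then to $n=2$ by monotonicity of the potential $n^2/\sin^2\varphi-2$. One small slip: it is not true that every Jacobi field has zero mean on \emph{each} boundary circle ($v_z^\perp=\pm 1/T$ does not); what is true is that $\int_{\partial\Sigma}u=0$ for every Jacobi field because $\int_{\partial\Sigma}v_z^\perp=0$ by cancellation between the two circles and $\xi|_{\partial\Sigma}=0$. This does not affect the rest of your proof.

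The difference lies in how the $n=2$ case is settled. The paper proves the stronger fact that $Q_2$ is positive definite on \emph{all} smooth $f$, via a ground-state transform (Lemma~\ref{ground_state}): writing $f=gh$ with $h=1/\sin^2\varphi$ and integrating by parts reduces $Q_2(f)$ to a manifestly non-negative bulk term plus boundary terms controlled by the single inequality $2\sinh T>\cosh T/T$. You instead stay on the two-dimensional kernel of $\mathscr{L}_2$, produce the antisymmetric solution $\cos\varphi\,(2+\sin^2\varphi)/\sin^2\varphi$ by reduction of order, and compute the two Steklov eigenvalues $2/\sqrt{T^2-1}$ and $(3T^4+1)/\big((3T^2-1)\sqrt{T^2-1}\big)$ explicitly. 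Your route is more hands-on and yields the exact eigenvalues (hence a quantitative margin), at the cost of needing the second solution and the symmetric/antisymmetric $Q$-orthogonality; the paper's route is softer and gives positivity of $Q_2$ on all test functions without ever writing down the odd solution.
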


\begin{proof}

We work in $(\varphi,\theta)$ coordinates and use the Fourier series \eqref{Fourier_detail} in the $\theta$ variable, introduced in Section 3.3:

$$
u(\varphi,\theta)=a v_z^\perp+b\xi+\alpha v_x^\perp+\beta v_y^\perp+\gamma v_{(x,z)}^\perp+\delta v_{(y,z)}^\perp+\sum_{n= 2}^\infty(a_n(\varphi)\cos(n\theta)+b_n(\varphi)\sin(n\theta)),
$$
where $a,b,\alpha,\beta,\gamma,\delta$ are real numbers. We first claim that under the hypotheses of Proposition \ref{positivity}, $a=\alpha=\beta=0$. Indeed, one clearly has

$$0=\int_{\partial \Sigma} uv_z^\perp=a\int_{\partial \Sigma} |v_z^\perp|^2,$$
hence $a=0$. Next we show that $\alpha=0$. Clearly, 

$$\int_{\partial \Sigma} uv_x^\perp=\alpha\int_{\partial \Sigma} |v_x^\perp|^2+\gamma\int_{\partial \Sigma}v_x^\perp v^\perp_{(x,z)},$$
all the other terms obviously vanishing by the well-known orthogonality properties of trigonometric functions. But recall that $v_x^\perp=\chi(\varphi)\cos(\theta)$ with $\chi$ even, whereas $v^\perp_{(x,z)}=\Lambda(\varphi)\cos(\theta)$ with $\Lambda$ odd. Therefore,

$$\int_{\partial \Sigma}v_x^\perp v^\perp_{(x,z)}=0.$$
Consequently,

$$0=\int_{\partial \Sigma} uv_x^\perp=\alpha\int_{\partial\Sigma} |v_x^\perp|^2,$$
and it follows that $\alpha=0$. A similar argument yields $\beta=0$.

Next, it follows easily from Green's formula that the above Fourier decomposition of $u$  is $Q$-orthogonal (to treat the two terms $Q(v_x^\perp,v_{(x,z)}^\perp)$ and $Q(v_y^\perp,v_{(y,z)}^\perp)$, use as above the fact that $\chi$ is even and $\Lambda$ is odd). Hence, using that $0=Q(\xi)=Q(v_{(x,z})^\perp)=Q(v_{(y,z)}^\perp)$, one gets

$$Q(u)=\sum_{n= 2}^\infty Q(a_n(\varphi)\cos(n\theta))+Q(b_n(\varphi)\sin(n\theta)).$$
For $n\geq 2$, define a quadratic form $Q_n$ on smooth function $f(\varphi)$, $\varphi\in[\varphi^*,\pi-\varphi^*]$, by requiring that

$$Q_n(f(\varphi))=Q(f(\varphi)\cos(n\theta))=Q(f(\varphi)\sin(n\theta)).$$
More explicitly, $Q_n$ is given by the following formula:

 $$2Q_n(f(\varphi))=\int_{\varphi^*}^{\pi-\varphi^*}\left(|f^\prime(\varphi)|^2+\left(\frac{n^2}{\sin^2(\varphi)}-2\right)f^2(\varphi)\right)\sin(\varphi)\,d\varphi-\frac{1}{T}(f(\pi-\varphi^*)^2+f(\varphi^*)^2).$$
 Thus,
 
 \begin{equation}\label{orthogonal}
Q(u)=\sum_{n=2}^\infty Q_n(a_n(\varphi))+Q_n(b_n(\varphi)).
 \end{equation}
 It is obvious that if $n\geq2$,
 
 $$Q_n(f(\varphi))\geq Q_2(f(\varphi)).$$
 The result of Proposition \ref{positivity} is a direct consequence of the following claim:
 
 \bigskip
\noindent {\bf Claim}: $$\boxed{\mathrm{The\,\, quadratic \,\,form\,\,} Q_2\mathrm{ \,\,is\,\, positive\,\, definite}.}$$
 
 In order to prove this claim, we need a criterion for positivity of quadratic forms having a boundary term:
 
 \begin{Lem}\label{ground_state}
 
 Let $S$ be a quadratic form acting on functions $f(\varphi)$, $\varphi\in [a,b]$, defined as
 
 $$S(f)=\int_a^b \left((f^\prime)^2+Vf^2\right)m(\varphi)d\varphi-\alpha(f^2(a)+f^2(b)),$$
 where $m$ is a positive, smooth function, $\alpha$ is a real number, and $V$ is a real potential on $[a,b]$. Assume that there is a positive function $h(\varphi)$, $\varphi\in [a,b]$, such that
 
$$\left(\frac{1}{m}\frac{d}{d\varphi}\left(m\frac{d }{d\varphi}\right)-V\right)h\leq0,$$ 
and such that 
 
$$-m(a)\frac{d}{d\varphi}\Big|_{\varphi=a} (\log h)>\alpha ,\,\,m(b)\frac{d}{d\varphi}\Big|_{\varphi=b} (\log h)>\alpha.$$
Then, $S$ is positive definite.

\end{Lem}

\noindent{\em Proof of Lemma \ref{ground_state}:} since $h$ is positive, one can write $f(\varphi)$ in the form

$$f(\varphi)=g(\varphi)h(\varphi).$$
One then expands the term $(f^\prime)^2$, to find that

$$S(f)=\int_a^b \left(g^2(h^\prime)^2+\frac{1}{2}(g^2)^\prime(h^2)^\prime+h^2(g^\prime)^2+Vg^2h^2\right)m-\alpha(f^2(a)+f^2(b)).$$
Integrating by parts the term $(g^2)^\prime(h^2)^\prime$ and using the assumptions on $h$, one finds

$$\begin{array}{rcl}
S(f)&=&\int_a^b \left(-\frac{1}{m}\frac{d}{d\varphi}\left(m\frac{d h}{d\varphi}\right)+Vh\right)hg^2m+\int_a^bh^2(g^\prime)^2m+\left[mg^2 h\frac{d h}{d \varphi}\right]_a^b-\alpha(f^2(a)+f^2(b))\\\\
&\geq& \int_a^bh^2(g^\prime)^2m+f^2(a)\left(-m(a)\frac{d}{d\varphi}\Big|_{\varphi=a} (\log h)-\alpha\right)+f^2(b)\left(m(b)\frac{d}{d\varphi}\Big|_{\varphi=b} (\log h)-\alpha\right),\\\\
>0,
\end{array}$$
and the result of Lemma \ref{ground_state} follows.

\cqfd
In order to prove the claim, one uses Lemma \ref{ground_state}; for that, one needs to find a suitable positive solution $h(\varphi)$ of \eqref{Fourier_eq} for $n=2$. Letting $x=\cos(\varphi)$, the equation \eqref{Fourier_eq} for $n=2$ becomes

\begin{equation}\label{Legendre}
\left\{-\frac{d}{dx}\left((1-x^2)\frac{d}{dx}\right)+\frac{4}{1-x^2}-2\right\}a(x)=0,\,-\frac{1}{T}\leq x\leq \frac{1}{T}.
\end{equation}
It is easily checked that the function $(1-x^2)^{-1}$ is a positive solution to \eqref{Legendre}. Coming back to the variable $\varphi$, one obtains the positive solution

$$h(\varphi)=\frac{1}{\sin^2(\varphi)}(=\cosh^2(s))$$
to \eqref{Fourier_eq} for $n=2$. One now checks that the $h$ satisfies the hypotheses of Lemma \ref{ground_state}.  We compute that 

$$\frac{d \log h}{d \varphi}=-2\,\mathrm{cotan}(\varphi).$$
For $\varphi=\varphi^*$, this is equal to $-2\tanh(T)\cosh(T)=-2\sinh(T)$. Since $T=\frac{\cosh(T)}{\sinh(T)}$, we get that $2\sinh(T)>\frac{\cosh(T)}{T}$, so 

$$-\frac{1}{\cosh(T)}\frac{d}{d\varphi}\Big|_{\varphi=\varphi^*} (\log h)>\frac{1}{T}.$$
One proves similarly that
 
$$\frac{1}{\cosh(T)}\frac{d}{d\varphi}\Big|_{\varphi=\pi-\varphi^*} (\log h)>\frac{1}{T}.$$
Thus, $h$ satisfies the hypotheses of Lemma \ref{ground_state}, and the claim follows. \\

\end{proof}

\subsection{Index is at most $4$} With the result of Proposition \ref{positivity} at hand, one can now finish the proof of Theorem \ref{CC} concerning the index of the critical catenoid. As mentioned before, the idea is to prove that $Q(u)\geq 0$, as soon as $0=Q(u,\1)=Q(u,v_x^\perp)=Q(u,v_y^\perp)=Q(u,v_z^\perp)$. Let us define an auxiliary function

$$f=u+a\1,$$
where $a$ is chosen so that 

$$\int_{\partial \Sigma}f=0.$$
Since $Q(v^\perp,\1)=0$, $v\in\R^3$, one has 

$$Q(f,v^\perp)=0,\,v\in\R^3.$$
Furthermore, since $Q(u,\1)=0$,

$$Q(u)=Q(f)-a^2Q(\1).$$
Since $Q(\mathbf{1})<0$, we see that in order to conclude that $Q(u)\geq0$, it is enough to show that $Q(f)\geq0$. Let $h$ be the unique Jacobi field with zero flux at the boundary, such that $h|_{\partial\Sigma}=f|_{\partial\Sigma}$ (the existence of $h$ is guaranteed by Lemma \ref{Dirichlet}). Then,

$$f=h+g,$$
with $g|_{\partial\Sigma}\equiv0$. By Green's formula,

$$Q(h,g)=\int_\Sigma (Jh)g+\int_{\partial\Sigma}g\left(\frac{\partial h}{\partial\nu}-h\right)=0.$$
Consequently,

$$Q(f)=Q(h)+Q(g).$$
Since $g|_{\partial\Sigma}\equiv0$, $Q(g)=\int_\Sigma (|\nabla g|^2-|A|^2g^2)$, which is non-negative since $\lambda_1(J)\geq0$, i.e. $\Sigma$ is stable for $J$ with Dirichlet boundary conditions (see Section 3.2). Also, by integration by parts,

$$0=Q(f,v^\perp)=\int_{\partial\Sigma}f\left(\frac{\partial v^\perp}{\partial \nu}-v^\perp\right),$$ and since $v_x^\perp$, $v_y^\perp$ and $v_z^\perp$ are Steklov eigenfunctions with eigenvalue different from $1$, the condition $0=Q(f,v^\perp)$, $v\in\R^3$ is equivalent to

$$0=\int_{\partial\Sigma}fv^\perp,\,v\in\R^3.$$
Thus, 

$$0=\int_{\partial\Sigma}h=\int_{\partial\Sigma}hv^\perp,\,v\in\R^3.$$
According to Proposition \ref{positivity}, one concludes that $Q(h)\geq0$, so $Q(f)\geq0$. The proof of Theorem \ref{CC} is complete.

\cqfd

\section{A general lower bound for the index}

In this section, we prove a general lower bound for the index of a free boundary minimal hypersurface $\Sigma^{n}$ of dimension $n$ in the unit ball $\Bb^{n+1}$ of $\R^{n+1}$, $n\geq2$. In order to do so, we will first need the following lemma, which will also be used in the next section:

\begin{Lem}\label{aux_quad}

Let $\Sigma^n$ be a free boundary minimal hypersurface in $\B^{n+1}$, which is not a flat $\Bb^n$. Let $Q$ be the quadratic form coming from the second variation of $\Sigma^n$, and let $S$ be the auxiliary quadratic form defined by

$$S(u)=\int_{\Sigma^{n}} |\nabla u|^2-\int_{\partial \Sigma^{n}}u^2,\,\,u\in C^\infty(\Sigma).$$
Then, for all $u\in C^\infty(\Sigma^n)\setminus\{0\}$,

$$Q(u)<S(u).$$
Moreover, $S\leq 0$ on the vector space $\mathcal{V}$ spanned by $\mathbf{1}$ and $\{x_v,\,;\,v\in \R^{n+1}\}$, where by definition $x_v:=(x,v)$ for $v\in \R^{n+1}$.

\end{Lem}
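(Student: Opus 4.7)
The statement splits into two essentially independent assertions, and both are short once the right viewpoint is in place.

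For the strict inequality $Q(u) < S(u)$ with $u \not\equiv 0$: observe that
\[
S(u) - Q(u) = \int_{\Sigma^n} |A|^2 u^2,
\]
so it suffices to show that $|A|^2>0$ on a set of full measure in $\Sigma^n$. The plan is to appeal to the fact that any minimal hypersurface of $\R^{n+1}$ is real-analytic, and hence $|A|^2$ is a real-analytic function on $\Sigma^n$. A non-identically-zero real-analytic function on a connected manifold has a zero set of measure zero, so the only way $|A|^2$ could fail to be positive almost everywhere is if $\Sigma^n$ is totally geodesic; but then the free boundary condition $\nu = X$ would force $\Sigma^n$ to be a flat $\Bb^n$ through the origin, contrary to hypothesis. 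Since the open set $\{u\neq 0\}$ is nonempty, it must meet $\{|A|^2>0\}$ in a set of positive measure, giving the strict positivity.

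For the inequality $S\leq 0$ on $\mathcal{V}$: the key input is that coordinate functions on a minimal submanifold of Euclidean space are harmonic, so $\Delta_{\Sigma^n} x_v = 0$ for every $v\in\R^{n+1}$. Combined with the free boundary identity $\nu = X$ on $\partial\Sigma^n$, this produces the boundary relation
\[
\frac{\partial x_v}{\partial\nu}\bigg|_{\partial\Sigma^n} = \langle v,\nu\rangle = \langle v,X\rangle = x_v\big|_{\partial\Sigma^n}.
\]
A one-line Green's formula then gives
\[
S(x_v,x_w) = \int_{\Sigma^n}\nabla x_v\cdot\nabla x_w - \int_{\partial\Sigma^n}x_v x_w = \int_{\partial\Sigma^n} x_v\frac{\partial x_w}{\partial\nu} - \int_{\partial\Sigma^n}x_v x_w = 0
\]
for all $v,w$, and likewise
\[
S(\mathbf{1}, x_v) = -\int_{\partial\Sigma^n} x_v = -\int_{\partial\Sigma^n}\frac{\partial x_v}{\partial\nu} = -\int_{\Sigma^n}\Delta x_v = 0.
\]
Thus $\mathbf{1}$ is $S$-orthogonal to $\operatorname{span}\{x_v\}$, $S$ is identically zero on that span, and $S(\mathbf{1}) = -|\partial\Sigma^n|\leq 0$. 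Writing $u = c\mathbf{1} + \sum_i c_i x_{e_i}$ in the basis, one gets $S(u) = -c^2|\partial\Sigma^n|\leq 0$ as required.

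I do not anticipate any genuine obstacle. The first half rests on the single nontrivial external fact that minimal hypersurfaces are real-analytic; the second half is pure bookkeeping with Green's formula once the harmonicity of $x_v$ on a minimal submanifold and the free boundary identity $\nu = X$ are in hand.
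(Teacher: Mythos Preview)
Your proof is correct. The second half (showing $S\le 0$ on $\mathcal V$) is essentially identical to the paper's argument, only more self-contained: where the paper cites \cite{FS1} for the fact that $x_v$ is a Steklov eigenfunction with eigenvalue $1$, you derive it directly from harmonicity of coordinate functions on a minimal submanifold together with the free boundary identity $\nu=X$.

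For the strict inequality $Q(u)<S(u)$, your route differs slightly from the paper's. The paper argues by contradiction that if $|A|\,|u|\equiv 0$ with $u\not\equiv 0$, then the zero set of $|A|$ must contain an open set, so the Gauss map is locally constant and a piece of $\Sigma$ lies in a hyperplane; unique continuation for the minimal graph equation then forces $\Sigma$ to be flat. You instead invoke real-analyticity of minimal hypersurfaces (Morrey), so that $|A|^2$ is real-analytic and hence either identically zero or nonzero almost everywhere. This is a cleaner packaging of the same regularity input: real-analyticity of course implies the unique continuation the paper uses, and your version avoids the small topological step about the critical set of the Gauss map having nonempty interior. Both arguments tacitly use connectedness of $\Sigma$, which is standard in this context.
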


\begin{proof}
Recall that $Q$ is given by the formula

$$Q(u)=\int_{\Sigma^{n}} |\nabla u|^2-|A|^2u^2-\int_{\partial \Sigma^{n}}u^2,\,\,u\in C^\infty(\Sigma).$$
Therefore, it is obvious that for $u\in C^\infty(\Sigma^n)$,

$$Q(u)\leq S(u),$$
the inequality being strict unless $|A||u|$ vanishes identically. We argue by contradiction: let $u\in C^\infty(\Sigma^n)\setminus\{0\}$, and assume that $|A||u|$ vanishes identically. By definition, $|A|$ vanishes at a point $p$, if and only if the Gauss map $N:\Sigma^n\to S^n$ has a critical point at $p$. Denote by $E\subset \Sigma^n$ the set of critical points of the Gauss map. The fact that $|A||u|$ vanishes identically implies that $u$ vanishes identically on $\Sigma^n\setminus E$. Since $u$ is continuous and not identically zero, $\Sigma^n\setminus E$ cannot be dense in $\Sigma^n$, or equivalently, $E$ must have an interior point $p$ in $\Sigma^n$. This implies that the Gauss map is constant in a neighborhood of $p$, hence that a small piece of $\Sigma^n$ is included in a ball $\Bb^n$; by an argument involving the unique continuation principle for the minimal graph equation, $\Sigma^n$ must be a ball $\Bb^n$, which is a contradiction. Thus, we have proved that for every $u\in C^\infty(\Sigma^n)\setminus \{0\}$, the function $|A||u|$ cannot vanish identically. As we have seen, this implies that $Q(u)<S(u)$, and the proof of the first part of the lemma is complete.

Concerning the second part, we argue as follows. By \cite{FS1}, since $\Sigma^{n}$ is a free boundary minimal hypersurface in $\Bb^{n+1}$, the functions $x_v$ are Steklov eigenfunctions with eigenvalue $1$. That is, $\Delta_{\Sigma^n}x_v=0$ in the interior of $\Sigma$, and 

$$\frac{\partial x_v}{\partial \nu}=x_v\mbox{ on }\partial\Sigma.$$
The variational characterization of Steklov eigenvalues (see \cite[p.4014]{FS1}) implies that

$$\int_{\partial\Sigma^{n}}x_v=0$$
(this can also be checked directly using Green's formula with $x_v$ and $\mathbf{1}$). Finally, one checks that $S(\mathbf{1})<0$, $S(x_v)=0$, and by integration by parts,

$$S(\mathbf{1},x_v)=\int_{\partial \Sigma^{n}}\left(\frac{\partial x_v}{\partial \nu}-x_v\right)=0.$$
Thus, $S\leq 0$ on $\mathcal{V}$, and this concludes the proof of the second part of the lemma.

\end{proof}
We are now ready to prove a general lower bound for the index.

\begin{Pro}\label{lower_index}

Let $\Sigma^n$ be a free boundary minimal hypersurface in $\B^{n+1}$, which is not a flat $\Bb^n$. Then, the index of $\Sigma$ is at least $n+2$. 

\end{Pro}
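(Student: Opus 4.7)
The plan is to exhibit an $(n+2)$-dimensional subspace of $C^\infty(\Sigma^n)$ on which the quadratic form $Q$ is negative definite; since the index of $\Sigma^n$ is the maximal dimension of such a subspace, this will deliver the lower bound $n+2$. The natural candidate is the space $\mathcal{V}$ already introduced in Lemma \ref{aux_quad}, spanned by $\mathbf{1}$ and the coordinate functions $x_v$ for $v \in \mathbb{R}^{n+1}$.

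First I would verify that $\dim \mathcal{V} = n+2$. The linear map $v \mapsto x_v|_{\Sigma^n}$ from $\mathbb{R}^{n+1}$ into $C^\infty(\Sigma^n)$ has as its kernel those $v$ for which $\Sigma^n$ is contained in the hyperplane orthogonal to $v$. Since $\Sigma^n$ is a smooth embedded hypersurface of dimension $n$, being contained in any hyperplane forces it to coincide with an open subset of that hyperplane; combined with the free boundary condition this forces the hyperplane to pass through the origin and $\Sigma^n$ to be a flat $\mathbb{B}^n$, which is excluded by hypothesis. Hence the kernel is trivial and the $x_v$ span an $(n+1)$-dimensional subspace of $C^\infty(\Sigma^n)$. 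An analogous argument gives $\mathbf{1} \notin \mathrm{span}\{x_v\}$: otherwise $x_v \equiv 1$ on $\Sigma^n$ for some nonzero $v$, placing $\Sigma^n$ inside an affine hyperplane not through the origin, which is incompatible with the free boundary condition $\nu = X$ on $\partial \Sigma^n$. Thus $\dim \mathcal{V} = n+2$.

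Finally, I would combine the two parts of Lemma \ref{aux_quad}: for every nonzero $u \in \mathcal{V}$ one has the strict inequality $Q(u) < S(u)$ and simultaneously $S(u) \leq 0$, so $Q(u) < 0$. Hence $Q$ is negative definite on the $(n+2)$-dimensional space $\mathcal{V}$, and the index of $\Sigma^n$ is at least $n+2$.

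The essential ingredient is the strict inequality in Lemma \ref{aux_quad}, which upgrades $S \leq 0$ on $\mathcal{V}$ to $Q < 0$; without strictness one would only recover $Q \leq 0$ on $\mathcal{V}$, which is not enough to conclude. The main (and only) subtlety in the write-up is the dimension count for $\mathcal{V}$, which uses the non-flatness hypothesis in two slightly different guises (to exclude hyperplanes through, and not through, the origin); the strict inequality itself was established in Lemma \ref{aux_quad} via unique continuation for the minimal graph equation, and no further analysis is required here.
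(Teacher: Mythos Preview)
Your proof is correct and follows essentially the same route as the paper's: both exhibit the space $\mathcal{V}=\mathrm{span}\{\mathbf{1},x_v\}$, invoke Lemma \ref{aux_quad} to get $Q(u)<S(u)\leq 0$ on $\mathcal{V}\setminus\{0\}$, and then argue that $\dim\mathcal{V}=n+2$ via the non-flatness hypothesis. Your dimension count is in fact slightly more carefully organised than the paper's, since you separate the two cases (kernel of $v\mapsto x_v$, and $\mathbf{1}\in\mathrm{span}\{x_v\}$) explicitly, whereas the paper folds them into a single hyperplane argument.
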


\begin{Rem}
{\em 
This improves upon \cite[Theorem 3.1]{FS2}, which implies that (under slightly stronger assumptions than Proposition \ref{lower_index}) the index of $\Sigma^n$ is at least $n+1$.
}
\end{Rem}

\begin{proof}

Recall the auxiliary quadratic form $S$ from Lemma \ref{aux_quad}, defined as

$$S(u)=\int_{\Sigma^{n}} |\nabla u|^2-\int_{\partial \Sigma^{n}}u^2.$$
By the second part of Lemma \ref{aux_quad}, $S\leq 0$ on the vector space $\mathcal{V}$ spanned by $\mathbf{1}$ and $\{x_v\,;\,v\in \R^{n+1}\}$, where $x_v:=(x,v)$. The first part of Lemma \ref{aux_quad} then implies that $Q$ is negative definite on $\mathcal{V}$. Let us show that $\mathcal{V}$ is of dimension $n+2$. Otherwise, there is $a\neq0$ and $v\in\R^{n+1}$ such that $a+x_v=0$ on $\Sigma^{n}$. Then, $\Sigma^{n}$ is included in the hyperplane $\mathcal{H}$ defined by

$$\mathcal{H}=\{x\in \R^{n+1}\,;\,\langle x,v\rangle=-a\}.$$
This means that $\Sigma^{n}$ is included in the flat $n$-ball $\Bb^{n+1}\cap \mathcal{H}$, which necessarily passes through the origin by the free boundary condition, i.e. $a=0$. This contradicts the assumption on $\Sigma^n$. Consequently, we have proved that $\mathcal{V}$ has dimension $n+2$, and the index of $\Sigma^{n}$ is at least $n+2$.

\end{proof}

\section{Free boundary minimal surfaces of index 4}
To conclude this article, we present two results pertaining to Conjecture \ref{uniqueness}. We note that these results have been obtained independently by H. Tran \cite{T}. We start with the following Lemma, which will be employed in the proof of these two results.

\begin{Lem}\label{Steklov4}

If $\Sigma$ is a free boundary minimal surface in $\Bb^3$ with index $4$, then $\sigma_1(\Sigma)$, the first Steklov eigenvalue for $\Delta$, is equal to $1$.

\end{Lem}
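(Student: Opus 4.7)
The plan is to argue by contradiction, noting that the inequality $\sigma_1(\Sigma) \leq 1$ is automatic: since $\Sigma$ has index $4$ it is not a flat disk, so by \cite{FS1} the coordinate functions $x_v = (x,v)$ for $v$ ranging over a basis of $\mathbb{R}^3$ give three linearly independent non-constant Steklov eigenfunctions with eigenvalue $1$, whence $1$ is an actual positive Steklov eigenvalue. So it suffices to rule out $\sigma_1 < 1$.

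Assume then $\sigma_1 < 1$ and let $u$ be an associated Steklov eigenfunction. Because Steklov eigenfunctions for distinct eigenvalues are $L^2(\partial\Sigma)$-orthogonal, $u$ is $L^2(\partial\Sigma)$-orthogonal to $\mathbf{1}$ (eigenvalue $0$) and to every $x_v$ (eigenvalue $1$). Form the candidate five-dimensional space $\mathcal{V}' := \mathcal{V} \oplus \mathbb{R}u$, where $\mathcal{V}$ is the four-dimensional space spanned by $\mathbf{1}$ and $\{x_v \,;\, v \in \mathbb{R}^3\}$ appearing in Proposition \ref{lower_index}. The independence $u \notin \mathcal{V}$ follows from the $L^2(\partial\Sigma)$-orthogonality together with uniqueness for the harmonic Dirichlet problem: were $u \in \mathcal{V}$, then $u|_{\partial\Sigma}$ would be orthogonal to all of $\mathcal{V}|_{\partial\Sigma}$ including itself, forcing $u|_{\partial\Sigma} \equiv 0$, and hence $u \equiv 0$ since $u$ is harmonic.

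The heart of the argument is to upgrade this to the statement that $Q$ is negative definite on $\mathcal{V}'$, which produces a subspace witnessing index at least $5$ and contradicts the hypothesis. The trick is to do all orthogonality computations for the auxiliary quadratic form $S$ of Lemma \ref{aux_quad}, not for $Q$. Using Green's formula together with $\Delta u = 0$ and $\partial u/\partial\nu = \sigma_1 u$ on $\partial\Sigma$, I find
$$
S(u,\mathbf{1}) = -\int_{\partial\Sigma} u = 0, \qquad S(u, x_v) = (\sigma_1 - 1)\int_{\partial\Sigma} u\, x_v = 0,
$$
both by $L^2(\partial\Sigma)$-orthogonality, and
$$
S(u) = (\sigma_1 - 1)\int_{\partial\Sigma} u^2 < 0.
$$
Combined with $S \leq 0$ on $\mathcal{V}$ from the second part of Lemma \ref{aux_quad}, the vanishing cross terms yield $S(f) \leq 0$ for every $f \in \mathcal{V}'$. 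The first part of Lemma \ref{aux_quad} supplies the strict comparison $Q(f) < S(f)$ for every nonzero smooth function on $\Sigma$, and hence $Q(f) < 0$ throughout $\mathcal{V}' \setminus \{0\}$, which is the desired contradiction.

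The main conceptual obstacle is that $Q$-orthogonality between $u$ and $\mathcal{V}$ is not apparent because of the $|A|^2 u w$ term in the bilinear form of $Q$; the resolution is to transfer all orthogonality statements to $S$, where they follow cleanly from the harmonic and Steklov structure, and to invoke the strict inequality $Q < S$ only at the very end as a one-shot comparison.
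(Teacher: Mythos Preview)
Your proof is correct and follows essentially the same approach as the paper's: both argue by contradiction, introduce the first Steklov eigenfunction, verify its $S$-orthogonality to $\mathcal{V}$ and that $S$ is non-positive on the resulting five-dimensional space, and then invoke the strict inequality $Q<S$ from Lemma~\ref{aux_quad} to contradict the index being $4$. Your write-up adds a bit more detail (the preliminary remark that $\sigma_1\le 1$, the explicit independence argument via uniqueness for the harmonic Dirichlet problem, and the conceptual commentary on why $S$ rather than $Q$ is the right form for the orthogonality computations), but the substance is identical.
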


\begin{proof}

Assume by contradiction that $\sigma_1(\Sigma)$ is strictly less than $1$, then there is a first eigenfunction $f$ satisfying $\Delta f=0$, $\frac{\partial f}{\partial \nu}=\sigma_1 f$ and 

$$\int_{\partial \Sigma}f=0,\,\int_{\partial\Sigma}fx_v=0,\,v\in \R^3.$$
Let $\mathcal{V}$ be the vector space spanned by $\mathbf{1}$ and $\{x_v\,;\,v\in \R^n\}$, and denote by $\mathcal{W}$ the vector space generated by $\mathcal{V}$ and $f$; it is $5$-dimensional, since $f$ is $L^2$-orthogonal to $\mathcal{V}$ in restriction to $\partial \Sigma$. Recall the auxiliary quadratic form $S$ from Lemma \ref{aux_quad}, defined by

$$S(u)=\int_{\Sigma} |\nabla u|^2-\int_{\partial \Sigma}u^2.$$
It follows from Green's formula that for every $g\in \mathcal{V}$,

$$S(f,g)=(\sigma_1-1)\int_{\partial \Sigma}fg=0.$$
Moreover,

$S(f)=(\sigma_1-1)\int_{\partial \Sigma}f^2<0$
A flat disk passing through the origin having index $1\neq 4$, $\Sigma$ is not a flat disk. Lemma \ref{aux_quad} implies that $Q$ is strictly negative on $\mathcal{W}$, which contradicts the fact that $Q$ has index $4$. Therefore, $\sigma_1(\Sigma)=1$.

\end{proof}

\begin{Cor}\label{stable}

Let $\Sigma$ be an oriented, free boundary minimal surface in $\Bb^3$ with index $4$. Then, the Jacobi operator on $\Sigma$ with Dirichlet boundary conditions has first eigenvalue equal to zero, and in particular $\Sigma$ is stable for perturbations fixing its boundary. More precisely, the normal to $\Sigma$ can be chosen so that the function $\xi:=(X,N)$ is a positive Jacobi field in the interior of $\Sigma$. If moreover $\partial\Sigma$ is embedded, then $\Sigma$ has genus zero.

\end{Cor}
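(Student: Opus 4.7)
The plan is to prove the three assertions in sequence: $\lambda_1^D(J)=0$ by an eigenvalue comparison argument, positivity of $\xi$ as an immediate consequence, and the topological conclusion via a covering argument. Throughout, the key object is the canonical Jacobi field $\xi:=(X,N)$. Since $\Sigma$ is minimal, $J\xi=0$ automatically, and the free boundary condition $\nu=X$ on $\partial\Sigma$ forces $\xi|_{\partial\Sigma}=(\nu,N)=0$. Moreover $\xi\not\equiv 0$: otherwise $X$ would be tangent to $\Sigma$ everywhere and $\Sigma$ would be a flat disk through the origin, which is excluded since flat disks have index $1\neq 4$. Hence $0$ is a Dirichlet eigenvalue of $J$ and $\lambda_1^D(J)\leq 0$.

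For the reverse inequality $\lambda_1^D(J)\geq 0$ --- the heart of part (a) --- I argue by contradiction. Assuming $\lambda_1^D(J)<0$ with first eigenfunction $\phi_1>0$ on the interior, I aim to produce a $5$-dimensional subspace on which $Q$ is negative definite, contradicting the index hypothesis. The candidate is $\mathcal{V}\oplus\langle\phi_1\rangle$, where $\mathcal{V}=\mathrm{span}(\mathbf{1},x_{v_1},x_{v_2},x_{v_3})$. To see this is genuinely $5$-dimensional I check $\phi_1\notin\mathcal{V}$: if $\phi_1=a+x_v$ then $\Delta\phi_1=0$ gives $J\phi_1=-|A|^2\phi_1$ and hence $|A|^2\equiv-\lambda_1^D>0$; but for a minimal surface in $\R^3$ the third fundamental form equals $(|A|^2/2)g$, and constancy of $|A|^2$ would make the Gauss map a local homothety onto $S^2$, forcing $K_g=|A|^2/2>0$ in contradiction with the Gauss equation $K_g=-|A|^2/2<0$. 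The main obstacle in (a) is then to prove that $Q$ is actually negative definite on $\mathcal{V}\oplus\langle\phi_1\rangle$: I would compute the $5\times 5$ Gram matrix $M$ of $Q$, whose top-left $4\times 4$ block $A=Q|_{\mathcal{V}}$ is negative definite by Lemma~\ref{aux_quad}, whose $(5,5)$-entry is $\lambda_1^D\|\phi_1\|^2<0$, and whose cross-terms are $Q(e_i,\phi_1)=-\int_\Sigma|A|^2 e_i\phi_1$ by Green's formula with $\phi_1|_{\partial\Sigma}=0$. Negative definiteness of $M$ then reduces, via Cauchy interlacing, to showing that the Schur complement $Q(\phi_1)-B^TA^{-1}B$ is negative, and the delicate control of this scalar quantity --- using the $L^2(|A|^2)$-Cauchy--Schwarz structure of the cross-terms together with the identity $\sigma_1(\Sigma)=1$ from Lemma~\ref{Steklov4} --- is the technical crux of the whole corollary.

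Parts (b) and (c) then follow. For (b): with $\lambda_1^D(J)=0$ simple, $\xi$ is up to scale the first Dirichlet eigenfunction, hence of constant sign; orienting $N$ so that $\xi\geq 0$ and applying Hopf's lemma gives $\xi>0$ in the interior. For (c), the positivity $\xi=(X,N)>0$ in the interior says that $X$ is nowhere tangent to $\Sigma$ there, so the radial projection $\pi:X\mapsto X/|X|$ is a local diffeomorphism on $\mathrm{int}(\Sigma)$ restricting to the identity on $\partial\Sigma\subset S^2$. With $\partial\Sigma$ embedded, this realizes $\Sigma$ as a covering of a spherical region bounded by $\partial\Sigma$, and a Riemann--Hurwitz count combined with the identity boundary monodromy forces $\Sigma$ to have genus zero. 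The secondary difficulty in (c) is to handle the boundary behavior of $\pi$ carefully, since $\xi$ vanishes on $\partial\Sigma$ and the projection degenerates there through the free boundary condition $\nu=X$.
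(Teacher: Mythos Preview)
Your argument for part (a) has a genuine gap at precisely the place you flag as ``the technical crux''. The Schur complement inequality $Q(\phi_1)-B^TA^{-1}B<0$ does \emph{not} follow from the ingredients you list. Since $A=Q|_{\mathcal V}$ is negative definite, $A^{-1}$ is negative definite as well, so $B^TA^{-1}B\le 0$ and the Schur complement equals $\lambda_1^D\|\phi_1\|^2 + |B^TA^{-1}B|$; you would need $|\lambda_1^D|\,\|\phi_1\|^2>|B^TA^{-1}B|$, i.e.\ a quantitative bound on the cross-terms $B_i=-\int_\Sigma |A|^2 e_i\phi_1$ against the gaps of $A$ and against $\lambda_1^D$. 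Nothing in Lemma~\ref{aux_quad} or Lemma~\ref{Steklov4} supplies such a bound, and Cauchy interlacing only guarantees four negative eigenvalues of the $5\times 5$ matrix, not five. In short, the space $\mathcal V\oplus\langle\phi_1\rangle$ is five-dimensional, but you have not shown $Q$ is negative definite on it.

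The paper bypasses this difficulty entirely by reversing the order of (a) and (b). Rather than proving $\lambda_1^D(J)\ge 0$ and deducing $\xi>0$, it proves $\xi>0$ in the interior directly, by a geometric argument: Lemma~\ref{Steklov4} gives $\sigma_1(\Sigma)=1$, so every coordinate function $x_v$ is a \emph{first} Steklov eigenfunction and, by Courant's nodal theorem together with Cheng's local structure theorem for nodal sets, vanishes only to first order. This means every plane through the origin meets $\Sigma$ transversally, which forces the affine tangent plane at any interior point to avoid the origin, hence $\xi(p)=(X(p),N(p))\neq 0$. Since $\xi$ is a nonvanishing Jacobi field with $\xi|_{\partial\Sigma}=0$, it is a first Dirichlet eigenfunction and $\lambda_1^D(J)=0$ follows. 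This route uses $\sigma_1=1$ in an essential (and completely different) way from what you sketched.

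For part (c), your idea is morally the paper's, but the execution there is cleaner: one caps off each boundary circle of $\Sigma$ with a disk to obtain a closed surface $\tilde\Sigma$, and then $p\mapsto p/|p|$ extends to a local homeomorphism $\tilde\Sigma\to S^2$, hence a covering of the simply connected $S^2$, hence a global homeomorphism. No Riemann--Hurwitz bookkeeping or separate boundary analysis is needed.
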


\begin{proof}

The stability of $\Sigma$ follows from the proof of \cite[Prop. 8.1]{FS2}; for the sake of completeness, let us recall the argument. By the well-known Fischer-Colbrie-Schoen/Allegretto-Moss-Piepenbrink lemma (see \cite{FCS}, \cite{MP}), since $(X,N)$ is a Jacobi field, it is enough to show that it does not vanish on $\Sigma$ to conclude that it is a first eigenfunction for $J$ with Dirichlet boundary conditions. By Lemma \eqref{Steklov4}, we know that for every $v\in \R^3\setminus\{0\}$, $x_v:=(X,v)$ is a first Steklov eigenfucntion of $\Delta$. By the nodal theorem for Steklov eigenfunctions, $x_v$ has at most $2$ nodal domains. According to S.Y. Cheng \cite{C}, at a point $p$ belonging to the zero set of $x_v$, $N$ nodal half-lines meet at $p$ if and only if $x_v$ vanishes at $p$ to order $N$. Since $x_v$ has at most $2$ nodal domains, it follows that it vanishes only at order one on its zero set. Since this is true for any $v\in \R^3\setminus \{0\}$, it follows that any plane passing through the origin intersects $\Sigma$ transversally. Thus, the affine tangent plane to an interior point cannot pass through zero. Assume now that $\xi$ vanishes at an interior point $p$. Thus, $X$ belongs to the tangent plane at $p$. But then, the line containing $p$ and having direction $X$ passes through the origin, and is in the affine tangent plane at $p$, which is a contradiction. Therefore, $\xi$ does not vanish in the interior, and we can choose the normal so that it is positive everywhere.\\

Assume now that $\partial\Sigma$ is embedded. Define a surface $\tilde{\Sigma}$ by gluing a disk on each connected component of $\Sigma$ and smoothing out the corners of the obtained surface. Then, the map $p\to \frac{p}{|p|}$ is a local homeomorphism from $\tilde{\Sigma}$ into $S^2$. Thus, it is a covering map, and since $S^2$ is simply connected, it must be a global homeomorphism. So, $\Sigma$ has genus zero.

\end{proof}
An immediate consequence of the Lemma \eqref{Steklov4}, together with \cite[Theorem 6.6]{FS2}, is the following result:

\begin{Cor}

Let $\Sigma$ be an oriented, free boundary minimal surface in $\Bb^3$. Assume that $\Sigma$ has index $4$, and is topologically an annulus. Then, $\Sigma$ is congruent to the critical catenoid.

\end{Cor}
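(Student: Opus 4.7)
The plan is to combine the index-$4$ hypothesis with the topological assumption via the Steklov eigenvalue characterization of the critical catenoid due to Fraser and Schoen. First, I would apply Lemma \ref{Steklov4} to the surface $\Sigma$: since $\Sigma$ has index exactly $4$, that lemma immediately gives $\sigma_1(\Sigma) = 1$, where $\sigma_1$ denotes the first (nonzero) Steklov eigenvalue of the Laplacian $\Delta$ on $\Sigma$.

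Next, I would recall (as was already used in the proof of Lemma \ref{aux_quad}) that because $\Sigma$ is a free boundary minimal surface in $\Bb^3$, the restrictions of the linear coordinate functions $x_v := (X, v)$, $v\in\R^3$, are harmonic on $\Sigma$ and satisfy $\frac{\partial x_v}{\partial \nu} = x_v$ on $\partial \Sigma$. In other words, each nonzero $x_v$ is a Steklov eigenfunction with eigenvalue $1$. Combined with the equality $\sigma_1(\Sigma) = 1$ from the previous step, this shows that the ambient coordinate functions on $\Sigma$ are first Steklov eigenfunctions of $\Delta$.

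Now I would invoke \cite[Theorem 6.6]{FS2}, which asserts that the only free boundary minimal annulus in $\Bb^3$ whose coordinate functions realize the first Steklov eigenvalue is, up to a rigid motion of $\R^3$, the critical catenoid. Since $\Sigma$ is assumed to be topologically an annulus and we have just verified the Steklov hypothesis, this theorem applies directly and yields that $\Sigma$ is congruent to the critical catenoid, completing the proof.

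The only real content here is Lemma \ref{Steklov4}; once that is in hand there is no genuine obstacle, because the heavy lifting (namely the rigidity statement for free boundary minimal annuli with first-eigenvalue coordinate functions) has already been carried out in \cite{FS2}. I therefore expect the writeup to be short, with the main care being to state clearly that the hypothesis $\sigma_1 = 1$ together with $x_v$ being a Steklov eigenfunction of eigenvalue $1$ is exactly the input required by \cite[Theorem 6.6]{FS2}.
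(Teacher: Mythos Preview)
Your proposal is correct and follows exactly the same approach as the paper, which derives the corollary as an immediate consequence of Lemma~\ref{Steklov4} together with \cite[Theorem 6.6]{FS2}. Your added remark that $\sigma_1(\Sigma)=1$ makes the coordinate functions $x_v$ into \emph{first} Steklov eigenfunctions is precisely the link the paper leaves implicit.
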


\bibliographystyle{alpha}

\end{document}